\numberwithin{equation}{section}   
\newcommand{\TheTitle}{Energy Stability and Convergence of SAV
 Block-centered Finite Difference Method for Gradient
Flows}
\newcommand{\TheAuthors}{~~}
\title{{\TheTitle}\thanks{Received by editors March 2, 2018 }}
    \author{ Xiaoli Li
        \thanks{School of Mathematics, Shandong University, Jinan 250100, China. Email: xiaolisdu@163.com.}
        \and Jie Shen  \thanks{Corresponding Author. Department of Mathematics, Purdue University, West Lafayette, IN 47907, USA. Email: shen7@purdue.edu.}
       \and Hongxing Rui
        \thanks{School of Mathematics, Shandong University, Jinan 250100, China. Email: hxrui@sdu.edu.cn.}    
        }
\begin{document}

\maketitle

\begin{abstract}
We present in this paper  construction and analysis of a  block-centered finite difference   method  for the  spatial discretization of the scalar auxiliary variable Crank-Nicolson  scheme   (SAV/CN-BCFD) for gradient flows, and show rigorously that scheme is second-order in both time and space in various discrete norms. When 
equipped with an adaptive time strategy, the SAV/CN-BCFD scheme is  accurate and extremely efficient. Numerical experiments on typical Allen-Cahn and Cahn-Hilliard equations are presented to verify our theoretical results and to show the robustness and accuracy of the SAV/CN-BCFD scheme.
\end{abstract}

 \begin{keywords}
scalar auxiliary variable (SAV),  gradient flows, energy stability, block-centered finite difference, error estimates, adaptive time stepping
 \end{keywords}

    \begin{AMS}
65M06, 65M12, 65M15, 35K20, 35K35,  65Z05
    \end{AMS}

\pagestyle{myheadings}
\thispagestyle{plain}

\section{Introduction}
Gradient flows are widely used in mathematical models for problems in many fields of science and engineering, particularly in materials science and fluid dynamics, cf. \cite{cahn1958free,cahn1959free,yue2004diffuse,shen2018scalar} and the references therein.  Therefore it is important to develop efficient and accurate numerical schemes for their simulation. There exists an extensive literature on the numerical analysis of gradient flows, see for instance \cite{Chen2015Convergence,grun2013convergent,Diegel2013Analysis,Feng2006Fully,shen2010phase,Elliott1989A,Guo2016An} and the references therein. 

In the algorithm design of gradient flows, an important goal is to guarantee the energy stability at the discrete level, in order to  capture the correct long-time dynamics of the system and provide enough flexibility for dealing with the stiffness problem induced by the thin interface. Many schemes for gradient flows are based on the traditional fully-implicit or explicit discretization for the nonlinear term, which may suffer from  harsh time step constraint due to  the thin interfacial width \cite{feng2003numerical,shen2010numerical}. In order to deal with this problem, the convex splitting approach \cite{shen2012second,wang2011energy, hu2009stable} and linear stabilization approach \cite{liu2007dynamics,shen2010numerical,xu2006stability,zhao2016energy} have been widely used to construct unconditionally energy stable schemes. However, the convex splitting approach usually leads to nonlinear schemes and linear stabilization approach is usually limited to first-order  accuracy.

Recently, a novel numerical method, the so called  invariant energy quadratization (IEQ),  was
proposed  in \cite{MR3564340,zhao2017novel,yang2017numerical}. This method is a generalization of the method of Lagrange multipliers or of auxiliary variable. The IEQ approach is remarkable as it permits us to construct linear, unconditionally stable, and second-order unconditionally energy stable schemes for a large class of gradient flows.
However, it leads to coupled systems with variable coefficients  that may be difficult or expensive to solve. The scalar auxiliary variable (SAV) approach \cite{shen2018scalar,shen2017new} was inspired by the IEQ approach, which inherits its main advantages  but overcomes many of its shortcomings. In particular, in a recent paper \cite{shen2018convergence}, the authors established the first-order convergence and error estimates for the semi-discrete  SAV scheme.

In this paper, we construct a  SAV/CN scheme with block-centered finite differences  for gradient flows, carried out a rigorous stability and error analysis, and implemented an adaptive time stepping strategy  so that the time step is only dictated by accuracy rather than by stability.  The block-centered finite difference method can be thought as the lowest order Raviart-Thomas mixed element method with a suitable quadrature. Its main advantage over using a regular finite difference method  is  that it can approximate both the phase function and chemical potential with   Neumann boundary conditions in the mixed formulation to second-order accuracy, and it guarantees local mass conservation. 
Our approach for error estimates here is very different from that in \cite{shen2018convergence} which is based on deriving $H^2$ bounds for the numerical solution. However, this approach can not be used in the fully discrete case with finite-differences in space. 
The essential tools used in the proof are the summation-by-parts formulae both in space and time to derive energy stability, and  an induction process to show that the discrete $L^\infty$ norm of the numerical solution is uniformly bounded, without assuming a uniform Lipschitz condition on the nonlinear potential. To the best of the authors' knowledge, this is the first paper with rigorous proof of second-order convergence both in time and space for a linear scheme to a class of  gradient flows without assuming a uniform Lipschitz condition for the nonlinear potential. 
 
The paper is organized as follows. In Section 2,  we  describe our numerical scheme, including the temporal discretization and spacial discretization.
In Section 3,  we demonstrate the  energy stability for our SAV/CN-BCFD scheme.
In Section 4, we carry out error estimates for the  SAV/CN-BCFD schemes.
In Section 5, we present some numerical experiments to verify the energy stability and accuracy of the proposed  schemes.

Throughout the paper we use $C$, with or without subscript, to denote a positive
constant, which could have different values at different places.

\section{The SAV/CN-BCFD scheme}
Given a typical energy functional \cite{shen2018convergence}:
\begin{equation}\label{definition of energy}
\aligned
E(\phi)=\int_{\Omega}(\frac{\lambda}{2}\phi^2+\frac{1}{2}|\nabla \phi|^2)d\textbf{x}+E_1(\phi),
\endaligned
\end{equation} 
where $\Omega$ is a rectangular domain in $\mathbb{R}^2$, $\lambda\geq 0$ and $E_1(\phi)=\int_{\Omega}F(\phi)d\textbf{x} \ge -c_0$ for some $c_0>0$, i.e., it is bounded from below.
We consider the following gradient flow:
 \begin{equation}\label{e_model}
  \left\{
   \begin{array}{l}
   \displaystyle\frac{\partial \phi}{\partial t}=M\mathcal{G}\mu,~~~~~~~~~~~~~in~\Omega\times J,\\
   \displaystyle \mu=-\Delta \phi+\lambda \phi+F^{\prime}(\phi),~~~~in~\Omega\times J,
   \end{array}
   \right.
  \end{equation}
 $J=(0,T]$, and $T$ denotes the final time. $M$ is the mobility constant which is positive. The chemical potential $\mu=\frac{\delta E}{\delta \phi}$. $\mathcal{G}=-1$ for the $L^2$ gradient flow and $\mathcal{G}=\Delta$ for the $H^{-1}$ gradient flow. $F(\phi)$ is the nonlinear free energy density and we focus on  as an example, when $E_1(\phi)=\int_{\Omega}\alpha(1-\phi^2)^2d\textbf{x}$, the $L^2$ and $H^{-1}$ gradient flows are the well-known Allen-Cahn and Cahn-Hilliard equations, respectively. 

 The boundary and initial conditions are as follows.
\begin{equation}\label{e5}
  \left\{
   \begin{array}{l}
   \partial_\textbf{n}\phi|_{\partial \Omega}=0,~~\partial_\textbf{n}\mu|_{\partial \Omega}=0,\\
   \phi|_{t=0}=\phi_0,
   \end{array}
   \right.
  \end{equation}
  where $\textbf{n}$ is the unit outward normal vector of the domain $\Omega$. The equation satisfies the following energy dissipation law:
 \begin{equation}\label{energy dissipation of energy}
\aligned
\frac{dE}{dt}=\int_{\Omega}\frac{\partial \phi}{\partial t}\mu d\textbf{x}=
M\int_{\Omega}\mu\mathcal{G}\mu d\textbf{x}\leq 0.
\endaligned
\end{equation}  
  \subsection{The semi discrete SAV/CN scheme}
 
 We recall the SAV/CN scheme introduced in \cite{shen2018scalar} first.
   
 Let $C_0>c_0$ so that $E_1(\phi)+C_0>0$. Without loss of generality, we substitute
 $E_1$ with $E_1+C_0$ without changing the gradient flow. Then $E_1$ has a positive 
 lower bound $\hat{C}_0=C_0-c_0$, which we still denote as $C_0$ for simplicity.

  In the SAV approach, a scalar variable $r(t)=\sqrt{E_1(\phi)}$ is introduced, and  the  
 system (\ref{e_model})  can be transformed into:
  \begin{numcases}{}
  \frac{\partial \phi}{\partial t}=M\mathcal{G}\mu,\label{e_true solution1}\\
   \mu=-\Delta \phi+\lambda \phi+\frac{r}{\sqrt{E_1(\phi)}}F^{\prime}(\phi),\label{e_true solution2}\\
   r_t=\frac{1}{2\sqrt{E_1(\phi)}}\int_{\Omega}F^{\prime}(\phi)\phi_t d\textbf{x},
  \label{e_true solution3} 
\end{numcases}
 Then, the SAV/CN scheme is given as follows:
 \begin{numcases}{}
 \frac{\phi^{n+1}-\phi^n}{\Delta t}=M\mathcal{G}\mu^{n+1/2}, \label{e_semi-discret1}\\
 \mu^{n+1/2}=-\Delta \phi^{n+1/2}+\lambda \phi^{n+1/2}
+\frac{r^{n+1/2}}{\sqrt{E_1(\tilde{\phi}^{n+1/2})}}F^{\prime}(\tilde{\phi}^{n+1/2}),\label{e_semi-discret2}\\
 \frac{r^{n+1}-r^n}{\Delta t}=\frac{1}{2\sqrt{E_1(\tilde{\phi}^{n+1/2})}}\int_{\Omega}F^{\prime}(\tilde{\phi}^{n+1/2}) \frac{\phi^{n+1}-\phi^n}{\Delta t} d\textbf{x},\label{e_semi-discret3}
\end{numcases}
where $\phi^{n+1/2}=\frac{1}{2}(\phi^n+\phi^{n+1})$, $r^{n+1/2}=\frac{1}{2}(r^n+r^{n+1})$,
$\tilde{\phi}^{n+1/2}$ can be any explicit approximation of $\phi(t^{n+1/2})$ with 
an error of $O(\Delta t^2)$. For instance, we may let $\tilde{\phi}^{n+1/2}$ be the extrapolation by
 \begin{equation}\label{e_explicit approx}
\aligned
\tilde{\phi}^{n+1/2}=\frac{1}{2}(3\phi^n-\phi^{n-1}).
\endaligned
\end{equation}  

  \subsection{Spacial discretization} 
 we apply the BCFD method on the staggered grids for the spacial discretization.
  
First we give some preliminaries.
Let $L^m(\Omega)$ be the standard Banach space with norm
$$\| v\|_{L^m(\Omega)}=\left(\int_{\Omega}| v|^md\Omega\right)^{1/m}.$$
For simplicity, let
$$(f,g)=(f,g)_{L^2(\Omega)}=\int_{\Omega}fg~d\Omega$$
denote the $L^2(\Omega)$ inner product,
 $\|v\|_{\infty}=\|v\|_{L^{\infty}(\Omega)}.$ And $W^{k,p}(\Omega)$ be the standard Sobolev space
$$W^{k,p}(\Omega)=\{g:~\| g\|_{W_p^k(\Omega)}<\infty\},$$
where
\begin{equation}\label{enorm1}
\| g\|_{W^{k,p}(\Omega)}=\left(\sum\limits_{|\alpha|\leq k}\| D^\alpha g\|_{L^p(\Omega)}^p \right)^{1/p}.
\end{equation}

The grid points are denoted by
$$(x_{i+1/2},y_{j+1/2}),~~~i=0,...,N_x,~~j=0,...,N_y,$$
and the notations similar to those in \cite{weiser1988convergence} are used.
\begin{equation*}
\aligned
&x_{i}=(x_{i-\frac{1}{2}}+x_{i+\frac{1}{2}})/2,~~~i=1,...,N_x,\\
&h_x=x_{i+\frac{1}{2}}-x_{i-\frac{1}{2}},~~~i=1,...,N_x, \\
&y_{j}=(y_{j-\frac{1}{2}}+y_{j+\frac{1}{2}})/2,~~~j=1,...,N_y,\\
&h_y=y_{j+\frac{1}{2}}-y_{j-\frac{1}{2}},~~~j=1,...,N_y,
\endaligned
\end{equation*}
where $h_x$ and $h_y$ are grid spacings in $x$ and $y$ directions, and $N_x$ and 
$N_y$ are the number of grids along the $x$ and $y$ coordinates, respectively.

Let $g_{i,j},~g_{i+\frac{1}{2},j},~g_{i,j+\frac{1}{2}}$ denote $g(x_{i},y_{j}),~g(x_{i+\frac{1}{2}},y_{j}),~g(x_{i},y_{j+\frac{1}{2}}). $ Define the discrete inner products and norms as follows,

\begin{equation*}
\aligned
&(f,g)_{m}=\sum\limits_{i=1}^{N_{x}}\sum\limits_{j=1}^{N_{y}}h_xh_yf_{i,j}g_{i,j},\\
&(f,g)_{x}=\sum\limits_{i=1}^{N_{x}-1}\sum\limits_{j=1}^{N_{y}}h_xh_yf_{i+\frac{1}{2},j}g_{i+\frac{1}{2},j},\\
&(f,g)_{y}=\sum\limits_{i=1}^{N_{x}}\sum\limits_{j=1}^{N_{y}-1}h_xh_yf_{i,j+\frac{1}{2}}g_{i,j+\frac{1}{2}},\\
&(\textit{\textbf{v}},\textit{\textbf{r}})_{TM}=(v_1,r_1)_{x}+(v_2,r_2)_{y}.
\endaligned
\end{equation*}
For simplicity, from now on we always
omit the superscript $n$ (the time level) if the omission does not cause conflicts.
Define
\begin{equation*}
\aligned
&[d_{x}g]_{i+\frac{1}{2},j}=(g_{i+1,j}-g_{i,j})/h_x,\\
&[d_{y}g]_{i,j+\frac{1}{2}}=(g_{i,j+1}-g_{i,j})/h_y,\\
&[D_{x}g]_{i,j}=(g_{i+\frac{1}{2},j}-g_{i-\frac{1}{2},j})/h_x,\\
&[D_{y}g]_{i,j}=(g_{i,j+\frac{1}{2}}-g_{i,j-\frac{1}{2}})/h_y,\\
&[d_{t}g]^n_{i,j}=(g_{i,j}^n-g_{i,j}^{n-1})/\Delta t.
\endaligned
\end{equation*}
The following discrete-integration-by-part lemma \cite{weiser1988convergence} plays an important role in the analysis.
\begin{lemma}\label{le1}
 Let $q_{i,j},w_{1,i+1/2,j}~and~ w_{2,i,j+1/2} $ be any values such that $w_{1,1/2,j}=w_{1,N_x+1/2,j}=w_{2,i,1/2}=w_{2,i,N_y+1/2}=0$, then
$$(q,D_xw_1)_m=-(d_xq,w_1)_x,$$
$$(q,D_yw_2)_m=-(d_yq,w_2)_y.$$
\end{lemma}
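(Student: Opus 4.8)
The plan is to establish each identity by a one-dimensional discrete summation-by-parts (Abel summation) carried out direction by direction, holding the transverse index fixed. I will prove the first identity in detail; the second follows by interchanging the roles of the $x$- and $y$-directions verbatim, with the boundary hypothesis $w_{2,i,1/2}=w_{2,i,N_y+1/2}=0$ replacing $w_{1,1/2,j}=w_{1,N_x+1/2,j}=0$.

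First I would expand both sides using the definitions of the discrete inner products $(\cdot,\cdot)_m$, $(\cdot,\cdot)_x$ and of the difference operators $D_x$, $d_x$. The factor $h_x$ in the inner products cancels the $1/h_x$ in the difference quotients, so after factoring out $h_y$ and the sum over $j$ the claim reduces to showing, for each fixed $j\in\{1,\dots,N_y\}$,
\begin{equation*}
\sum_{i=1}^{N_x} q_{i,j}\bigl(w_{1,i+1/2,j}-w_{1,i-1/2,j}\bigr) = -\sum_{i=1}^{N_x-1}\bigl(q_{i+1,j}-q_{i,j}\bigr)\,w_{1,i+1/2,j}.
\end{equation*}

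The key step is the telescoping rearrangement of the left-hand side: I split it as $\sum_{i=1}^{N_x} q_{i,j} w_{1,i+1/2,j} - \sum_{i=1}^{N_x} q_{i,j} w_{1,i-1/2,j}$ and re-index the second sum by $i\mapsto i+1$, so that both sums now run over the half-integer nodes $i+\tfrac12$. Combining the overlapping terms $i=1,\dots,N_x-1$ yields the factor $q_{i,j}-q_{i+1,j}$, i.e.\ exactly the right-hand side, while the leftover non-overlapping terms are precisely the boundary contributions $q_{N_x,j}\,w_{1,N_x+1/2,j}$ and $-\,q_{1,j}\,w_{1,1/2,j}$. The hypothesis $w_{1,1/2,j}=w_{1,N_x+1/2,j}=0$ kills these two terms, which completes the reduction and hence the proof after multiplying by $h_y$ and summing over $j$.

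I do not anticipate any genuine obstacle: the statement is a purely algebraic identity, and the only points requiring a little care are getting the index shift in the re-indexed sum right and checking that the surviving, non-telescoped terms coincide exactly with the quantities annihilated by the boundary conditions. This is the discrete counterpart of integration by parts with vanishing boundary flux, and it is used repeatedly in the energy-stability and error analysis that follow.
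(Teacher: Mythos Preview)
Your proof is correct: the reduction to the one-dimensional identity after cancelling $h_x$ and factoring out $h_y\sum_j$ is accurate, and the Abel-summation step with the index shift $i\mapsto i+1$ yields exactly the boundary terms $q_{N_x,j}\,w_{1,N_x+1/2,j}-q_{1,j}\,w_{1,1/2,j}$, which vanish by hypothesis. The paper does not give its own proof of this lemma but simply cites \cite{weiser1988convergence}; your direct verification is the standard argument one would find there.
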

  \subsubsection{SAV/CV-BCFD scheme for $H^{-1}$ gradient flow}

Let us denote by $\{Z^n, W^n, R^n\}_{n=0}^{N}$ the BCFD approximations to $\{\phi^n,\mu^n, r^n\}_{n=0}^{N}$. The scheme for $H^{-1}$ gradient flow is as follows:
for $1\leq i \leq N_x,~1\leq j\leq N_y$,
 \begin{numcases}{}
~[d_tZ]_{i,j}^{n+1}=M[D_xd_xW+D_yd_yW]_{i,j}^{n+1/2}, \label{e_H1_full-discret1}\\
~W_{i,j}^{n+1/2}=-[D_xd_xZ+D_yd_yZ]_{i,j}^{n+1/2}+\lambda Z^{n+1/2}_{i,j}\label{e_H1_full-discret2}\\ 
~+\frac{R^{n+1/2}}{\sqrt{E_1^h(\tilde{Z}^{n+1/2})}}F^{\prime}(\tilde{Z}_{i,j}^{n+1/2}),\notag
\\
~d_tR^{n+1}=\frac{1}{2\sqrt{E_1^h(\tilde{Z}^{n+1/2})}}(F^{\prime}(\tilde{Z}^{n+1/2}),
d_tZ^{n+1})_m,\label{e_H1_full-discret3}
\end{numcases}
where $\tilde{Z}^{n+1/2}$ is an approximation of $\tilde{\phi}^{n+1/2}$, and
$$E_1^h(\tilde{Z}^{n+1/2})=\sum\limits_{i=1}^{N_{x}}\sum\limits_{j=1}^{N_{y}}h_xh_yF(\tilde{Z}^{n+1/2}_{i,j}).$$

The boundary and initial approximations as follows.
\begin{equation}\label{e_H1_boundary and initial condition}
  \left\{
   \begin{array}{l}
    \displaystyle [d_xZ]_{1/2,j}^{n}=[d_xZ]_{N_x+1/2,j}^{n}=0,\quad 1\leq j\leq N_y,\\
   \displaystyle  [d_yZ]_{i,1/2}^{n}=[d_yZ]_{i,N_y+1/2}^{n}=0,\quad 1\leq i\leq N_x,\\
   \displaystyle  [d_xW]_{1/2,j}^{n}=[d_xW]_{N_x+1/2,j}^{n}=0,\quad 1\leq j\leq N_y,\\
   \displaystyle  [d_yW]_{i,1/2}^{n}=[d_yW]_{i,N_y+1/2}^{n}=0,\quad 1\leq i\leq N_x,\\
   \displaystyle Z_{i,j}^0=\phi_{0,i,j}, 1\leq i\leq N_x,1\leq j\leq N_y.
   \end{array}
   \right.
  \end{equation}
  
{\bf Remark}. {\it The solution procedure of the above scheme is described in detail in \cite{shen2018scalar,shen2017new}, and hence is omitted here.}

  \subsubsection{SAV/CV-BCFD scheme for $L^{2}$ gradient flow}

Let us denote by $\{Z^n, W^n, R^n\}_{n=0}^{N}$ the BCFD approximations to $\{\phi^n,\mu^n, r^n\}_{n=0}^{N}$.  The scheme for $L^2$ gradient flow is as follows:
for $1\leq i \leq N_x,~1\leq j\leq N_y$,
 \begin{numcases}{}
~[d_tZ]_{i,j}^{n+1}=-MW_{i,j}^{n+1/2}, \label{e_L2_full-discret1}\\
~W_{i,j}^{n+1/2}=-[D_xd_xZ+D_yd_yZ]_{i,j}^{n+1/2}+\lambda Z^{n+1/2}_{i,j}\label{e_L2_full-discret2}\\ 
~+\frac{R^{n+1/2}}{\sqrt{E_1^h(\tilde{Z}^{n+1/2})}}F^{\prime}(\tilde{Z}_{i,j}^{n+1/2}),\notag
\\
~d_tR^{n+1}=\frac{1}{2\sqrt{E_1^h(\tilde{Z}^{n+1/2})}}(F^{\prime}(\tilde{Z}^{n+1/2}),
d_tZ^{n+1})_m,\label{e_L2_full-discret3}
\end{numcases}
where $\tilde{Z}^{n+1/2}$ is an approximation of $\tilde{\phi}^{n+1/2}$. The boundary and initial conditions are given in  (\ref{e_H1_boundary and initial condition}).

  \section{Unconditional energy stability} 
We  demonstrate below that the full discrete SAV/CN-BCFD schemes are unconditionally energy stable with the discrete energy functional
\begin{equation}\label{definition of discretization of energy}
\aligned
E_d(Z^n)=\frac{\lambda}{2}\|Z^n\|^2_m+\frac{1}{2}\|\textbf{d}Z^n\|_{TM}^2+(R^n)^2,
\endaligned
\end{equation} 
where $\textbf{d}Z=(d_xZ,d_yZ)$.
  \subsection{$H^{-1}$ gradient flow}
  
 \begin{theorem}\label{thm: H1_discrete total energy}
The scheme (\ref{e_H1_full-discret1})-(\ref{e_H1_full-discret3}) is unconditionally stable and the following discrete energy law holds for any $\Delta t$:
\begin{equation}\label{e_H1_discretization of energy decay}
\aligned
\frac{1}{\Delta t}[E_d(Z^{n+1})-E_d(Z^n)]=-M\|\textbf{d}W^{n+1/2}\|_{TM}^2
,\ \ \forall n\geq 0.
\endaligned
\end{equation} 
\end{theorem}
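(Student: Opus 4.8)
The plan is to reproduce, at the fully discrete level, the continuous energy computation \eqref{energy dissipation of energy}: test the flow equation with the discrete ``chemical potential'' $W^{n+1/2}$, test the $\mu$-equation with the discrete time increment $[d_tZ]^{n+1}$, and use the $R$-equation to collapse the nonlinear term into a difference of squares. Two mechanisms do all the work: the spatial discrete integration-by-parts of Lemma~\ref{le1}, invoked with the discrete Neumann conditions in \eqref{e_H1_boundary and initial condition}; and the elementary ``summation-by-parts in time'' identity $(g^{n+1/2},[d_tg]^{n+1})_\star=\frac{1}{2\Delta t}\big(\|g^{n+1}\|_\star^2-\|g^n\|_\star^2\big)$, valid for each of the discrete inner products $\star\in\{m,x,y\}$, which is precisely the algebraic reason the Crank--Nicolson midpoint evaluation yields an exact telescoping identity rather than merely an inequality.

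First I would take the discrete inner product $(\cdot,\cdot)_m$ of \eqref{e_H1_full-discret1} with $W^{n+1/2}$. On the right-hand side one has $M([D_xd_xW+D_yd_yW]^{n+1/2},W^{n+1/2})_m$; since $[d_xW]^{n+1/2}$ vanishes at $i=1/2$ and $i=N_x+1/2$ and $[d_yW]^{n+1/2}$ vanishes at $j=1/2$ and $j=N_y+1/2$ by \eqref{e_H1_boundary and initial condition}, Lemma~\ref{le1} gives $([D_xd_xW]^{n+1/2},W^{n+1/2})_m=-\|d_xW^{n+1/2}\|_x^2$ and the analogue in $y$, so the right-hand side equals $-M\|\textbf{d}W^{n+1/2}\|_{TM}^2$. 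Hence $([d_tZ]^{n+1},W^{n+1/2})_m=-M\|\textbf{d}W^{n+1/2}\|_{TM}^2$.

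Next I would take $(\cdot,\cdot)_m$ of \eqref{e_H1_full-discret2} with $[d_tZ]^{n+1}$; its left-hand side $(W^{n+1/2},[d_tZ]^{n+1})_m$ coincides with the quantity just computed. For the term $-([D_xd_xZ+D_yd_yZ]^{n+1/2},[d_tZ]^{n+1})_m$ I apply Lemma~\ref{le1} with the flux $d_xZ^{n+1/2}$ (which vanishes on the boundary by \eqref{e_H1_boundary and initial condition}), obtaining $([d_xZ]^{n+1/2},[d_xd_tZ]^{n+1})_x=\frac{1}{2\Delta t}\big(\|d_xZ^{n+1}\|_x^2-\|d_xZ^{n}\|_x^2\big)$ via the time identity, plus the $y$ analogue; the linear term contributes $\lambda(Z^{n+1/2},[d_tZ]^{n+1})_m=\frac{\lambda}{2\Delta t}\big(\|Z^{n+1}\|_m^2-\|Z^{n}\|_m^2\big)$; and for the nonlinear term, \eqref{e_H1_full-discret3} gives $(F'(\tilde Z^{n+1/2}),d_tZ^{n+1})_m=2\sqrt{E_1^h(\tilde Z^{n+1/2})}\,d_tR^{n+1}$, so it reduces to $2R^{n+1/2}d_tR^{n+1}=\frac{1}{\Delta t}\big((R^{n+1})^2-(R^{n})^2\big)$, again by the time identity. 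Summing these and recalling $\|\textbf{d}Z\|_{TM}^2=\|d_xZ\|_x^2+\|d_yZ\|_y^2$ together with \eqref{definition of discretization of energy}, the right-hand side is exactly $\frac{1}{\Delta t}[E_d(Z^{n+1})-E_d(Z^n)]$. Equating this with the value $-M\|\textbf{d}W^{n+1/2}\|_{TM}^2$ from the previous step yields \eqref{e_H1_discretization of energy decay}, and unconditional stability ($E_d(Z^{n+1})\le E_d(Z^n)$) follows since $M>0$.

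The computation is essentially mechanical, so there is no serious obstacle; the only points needing care are (i) checking that the flux quantities fed into Lemma~\ref{le1} are exactly the ones annihilated by \eqref{e_H1_boundary and initial condition}, namely $d_xW^{n+1/2},d_yW^{n+1/2},d_xZ^{n+1/2},d_yZ^{n+1/2}$, and (ii) tracking the midpoint/increment bookkeeping so that every term genuinely telescopes, in particular that $d_x$ and $d_t$ commute. No smallness, Lipschitz, or regularity hypothesis on $F$ enters; solvability of the linear system defining $Z^{n+1}$ is handled separately as in \cite{shen2018scalar,shen2017new}.
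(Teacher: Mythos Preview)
Your proof is correct and follows essentially the same approach as the paper: testing \eqref{e_H1_full-discret1} with $W^{n+1/2}$, testing \eqref{e_H1_full-discret2} with $[d_tZ]^{n+1}$, invoking Lemma~\ref{le1} under the boundary conditions \eqref{e_H1_boundary and initial condition}, and using \eqref{e_H1_full-discret3} (equivalently, multiplying it by $R^{n+1}+R^n$) to turn the nonlinear contribution into $\frac{1}{\Delta t}\big((R^{n+1})^2-(R^n)^2\big)$. The only cosmetic difference is that the paper treats the $R$-equation as a separate identity before combining, whereas you substitute it directly into the nonlinear term; the algebra is identical.
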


\begin{proof}  
Multiplying equation (\ref{e_H1_full-discret1}) by $W_{i,j}^{n+1/2}h_xh_y$, and making summation on $i,j$ for $1\leq i\leq N_x,~1\leq j\leq N_y$, we have 
\begin{equation}\label{e_H1_energy analysis1}
\aligned
(d_tZ^{n+1},W^{n+1/2})_m=M(D_xd_xW^{n+1/2}+D_yd_yW^{n+1/2},W^{n+1/2})_m.
\endaligned
\end{equation} 
Using Lemma \ref{le1}, equation (\ref{e_H1_energy analysis1}) can be transformed into the following:
\begin{equation}\label{e_H1_energy analysis1_sumpart}
\aligned
(d_tZ^{n+1},W^{n+1/2})_m=&-M(\|d_xW^{n+1/2}\|_x^2+\|d_yW^{n+1/2}\|_y^2)\\
=&-M\|\textbf{d}W^{n+1/2}\|_{TM}^2.
\endaligned
\end{equation} 
Multiplying equation (\ref{e_H1_full-discret2}) by $d_tZ^{n+1}_{i,j}h_xh_y$, and making summation on $i,j$ for $1\leq i\leq N_x,~1\leq j\leq N_y$, we have 
\begin{equation}\label{e_H1_energy analysis2}
\aligned
(d_tZ^{n+1},W^{n+1/2})_m=&-(D_xd_xZ^{n+1/2}+D_yd_yZ^{n+1/2},d_tZ^{n+1})_m
\\
&+\frac{R^{n+1/2}}{\sqrt{E_1^h(\tilde{Z}^{n+1/2})}}
(F^{\prime}(\tilde{Z}^{n+1/2}), d_tZ^{n+1})_m\\
&+\lambda(Z^{n+1/2},d_tZ^{n+1})_m.
\endaligned
\end{equation} 
Using Lemma \ref{le1} again, the first term on the right hand side of equation (\ref{e_H1_energy analysis2}) can be written as:
\begin{equation}\label{e_H1_energy analysis2_first term}
\aligned
&-(D_xd_xZ^{n+1/2}+D_yd_yZ^{n+1/2},d_tZ^{n+1})_m\\
=&(d_xZ^{n+1/2},d_td_xZ^{n+1})_x+(d_yZ^{n+1/2},d_td_yZ^{n+1})_y\\
=&\frac{\|\textbf{d}Z^{n+1}\|^2_{TM}-\|\textbf{d}Z^n\|_{TM}^2}{2\Delta t}.
\endaligned
\end{equation} 
Multiplying equation (\ref{e_H1_full-discret3}) by $R^{n+1}+R^{n}$ leads to 
\begin{equation}\label{e_H1_energy analysis3}
\aligned
\frac{(R^{n+1})^2-(R^{n})^2}{\Delta t}=
\frac{R^{n+1/2}}{\sqrt{E_1^h(\tilde{Z}^{n+1/2})}}(F^{\prime}(\tilde{Z}^{n+1/2}),
d_tZ^{n+1})_M.
\endaligned
\end{equation}
Combining equation (\ref{e_H1_energy analysis3}) with equations (\ref{e_H1_energy analysis1_sumpart}) - (\ref{e_H1_energy analysis2_first term}) gives that
\begin{equation}\label{e_H1_energy analysis4}
\aligned
&\frac{(R^{n+1})^2-(R^{n})^2}{\Delta t}+\lambda \frac{\|Z^{n+1}\|^2_m-\|Z^n\|_m^2}{2\Delta t}\\
&+\frac{\|\textbf{d}Z^{n+1}\|^2_{TM}-\|\textbf{d}Z^n\|_{TM}^2}{2\Delta t}\\
=&-M\|\textbf{d}W^{n+1/2}\|_{TM}^2\leq 0,
\endaligned
\end{equation}
which implies the desired results (\ref{e_H1_discretization of energy decay}).
\quad\end{proof}

\medskip
 \subsection{$L^2$ gradient flow}
 For $L^2$ gradient flow, we shall only state the result, as its proof is essentially the same as for the $H^{-1}$ gradient flow.
 
 \begin{theorem}\label{thm: L2_discrete total energy}
The scheme (\ref{e_L2_full-discret1})-(\ref{e_L2_full-discret3}) is unconditionally stable and the following discrete energy law holds for any $\Delta t$:
\begin{equation}\label{e_L2_discretization of energy decay}
\aligned
\frac{1}{\Delta t}[E_d(Z^{n+1})-E_d(Z^n)]=-M\|W^{n+1/2}\|_m^2
,\ \ \forall n\geq 0.
\endaligned
\end{equation} 
\end{theorem}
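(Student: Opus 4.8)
The plan is to follow the proof of Theorem~\ref{thm: H1_discrete total energy} almost verbatim; the only structural difference is that the operator $\mathcal{G}=-1$ replaces $\mathcal{G}=\Delta$, so the step corresponding to (\ref{e_H1_energy analysis1})--(\ref{e_H1_energy analysis1_sumpart}) becomes trivial and needs no spatial summation-by-parts. The two ingredients doing the work are Lemma~\ref{le1} (discrete integration by parts in space) together with the discrete homogeneous Neumann conditions in (\ref{e_H1_boundary and initial condition}), and the Crank--Nicolson identity $a^{n+1/2}(a^{n+1}-a^n)=\tfrac12\big((a^{n+1})^2-(a^n)^2\big)$ for summation by parts in time.

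First I would test (\ref{e_L2_full-discret1}) against $W_{i,j}^{n+1/2}h_xh_y$ and sum over $1\le i\le N_x$, $1\le j\le N_y$, which immediately gives
\begin{equation*}
\aligned
(d_tZ^{n+1},W^{n+1/2})_m=-M\|W^{n+1/2}\|_m^2 .
\endaligned
\end{equation*}
Next, exactly as in (\ref{e_H1_energy analysis2})--(\ref{e_H1_energy analysis2_first term}), I would test (\ref{e_L2_full-discret2}) against $d_tZ_{i,j}^{n+1}h_xh_y$ and sum. Applying Lemma~\ref{le1} with $w_1=d_xZ^{n+1/2}$, $w_2=d_yZ^{n+1/2}$ (whose boundary values vanish by (\ref{e_H1_boundary and initial condition})) converts $-(D_xd_xZ^{n+1/2}+D_yd_yZ^{n+1/2},d_tZ^{n+1})_m$ into $(d_xZ^{n+1/2},d_td_xZ^{n+1})_x+(d_yZ^{n+1/2},d_td_yZ^{n+1})_y$, and then the Crank--Nicolson identity applied componentwise to $\textbf{d}Z$ — and to $Z$ itself in the $\lambda$-term — yields
\begin{equation*}
\aligned
(d_tZ^{n+1},W^{n+1/2})_m={}&\frac{\|\textbf{d}Z^{n+1}\|^2_{TM}-\|\textbf{d}Z^n\|_{TM}^2}{2\Delta t}+\lambda\frac{\|Z^{n+1}\|^2_m-\|Z^n\|_m^2}{2\Delta t}\\
&+\frac{R^{n+1/2}}{\sqrt{E_1^h(\tilde{Z}^{n+1/2})}}(F'(\tilde{Z}^{n+1/2}),d_tZ^{n+1})_m .
\endaligned
\end{equation*}

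Finally I would multiply (\ref{e_L2_full-discret3}) by $R^{n+1}+R^n$ to obtain
\begin{equation*}
\aligned
\frac{(R^{n+1})^2-(R^n)^2}{\Delta t}=\frac{R^{n+1/2}}{\sqrt{E_1^h(\tilde{Z}^{n+1/2})}}(F'(\tilde{Z}^{n+1/2}),d_tZ^{n+1})_m ,
\endaligned
\end{equation*}
equate the two expressions for $(d_tZ^{n+1},W^{n+1/2})_m$, substitute this last identity to cancel the nonlinear contribution, and rearrange; recalling the definition (\ref{definition of discretization of energy}) of $E_d$ then gives exactly (\ref{e_L2_discretization of energy decay}).

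As for difficulties, there is essentially no analytical obstacle: unconditional stability is hard-wired into the SAV/CN construction through the auxiliary variable, and the BCFD scheme is tailored so that Lemma~\ref{le1} supplies the spatial summation-by-parts with no error terms. The one point that merely requires attention is verifying that the discrete boundary stencil in (\ref{e_H1_boundary and initial condition}) is precisely the hypothesis of Lemma~\ref{le1} applied to $d_xZ^{n+1/2}$ and $d_yZ^{n+1/2}$; everything else reduces to the elementary time-discrete identity used above.
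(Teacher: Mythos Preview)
Your proposal is correct and follows exactly the approach the paper intends: the paper omits the proof entirely, stating only that it ``is essentially the same as for the $H^{-1}$ gradient flow,'' and your argument reproduces the proof of Theorem~\ref{thm: H1_discrete total energy} with the single simplification that testing (\ref{e_L2_full-discret1}) against $W^{n+1/2}$ directly yields $-M\|W^{n+1/2}\|_m^2$ without invoking Lemma~\ref{le1}.
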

 \section{Error estimates} 
In this section, we derive our main results of this paper, i.e.,  error estimates for the fully discrete SAV/CN-BCFD schemes.

For simplicity, we set
\begin{equation*}
\aligned
&\displaystyle e_{\phi}^n=Z^n-\phi^n,
~~\displaystyle e_{\mu}^{n}=W^{n}-\mu^n,
~~\displaystyle e_{r}^n=R^n-r^n.
\endaligned
\end{equation*} 
  \subsection{$H^{-1}$ gradient flow} 
  We shall first derive error estimates for the case of  $H^{-1}$ gradient flow.

\begin{theorem}\label{thm: H1_error_estimates} 
We assume  that $F(\phi)\in C^3(\mathbb{R})$ and $\phi\in W^{1,\infty}(J;W^{4,\infty}(\Omega)) \cap W^{3,\infty}(J;W^{1,\infty}(\Omega)),~\mu\in L^{\infty}(J;W^{4,\infty}(\Omega))$. Let $\Delta t\leq C(h_x+h_y)$, then for the discrete scheme 
(\ref{e_H1_full-discret1})-(\ref{e_H1_full-discret3}), there exists a positive constant $C$ independent of $h_x$, $h_y$ and $\Delta t$ such that
\begin{equation}\label{e_H1_error_estimate1}
\aligned
&\|Z^{k+1}-\phi^{k+1}\|_m+\|\textbf{d}Z^{k+1}-\textbf{d}\phi^{k+1}\|_{TM}+
|R^{k+1}-r^{k+1}|\\
&+\left(\sum_{n=0}^{k}\Delta t\|\textbf{d}W^{n+1/2}-\textbf{d}\mu^{n+1/2}\|_{TM}^2\right)^{1/2}\\
&+\left(\sum_{n=0}^{k}\Delta t\|W^{n+1/2}-\mu^{n+1/2}\|_{m}^2\right)^{1/2}\\
\leq&C(\|\phi\|_{W^{1,\infty}(J;W^{4,\infty}(\Omega))}+\|\mu\|_{L^{\infty}(J;W^{4,\infty}(\Omega))} )(h_x^2+h_y^2)\\
&+C\|\phi\|_{W^{3,\infty}(J;W^{1,\infty}(\Omega))}\Delta t^2.
\endaligned
\end{equation}
\end{theorem}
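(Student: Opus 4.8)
The plan is to run a standard energy-type error argument for the Crank--Nicolson time discretization combined with the block-centered spatial quadrature, but to replace the usual "assume a global Lipschitz bound on $F'$" device by an induction on the time level that simultaneously closes the error estimate and the a priori $L^\infty$ bound on $Z^n$. First I would write out the truncation-error equations: insert the exact solution $(\phi,\mu,r)$ (with $r^n=\sqrt{E_1(\phi^n)}$) into the scheme (\ref{e_H1_full-discret1})--(\ref{e_H1_full-discret3}) and collect the consistency residuals. Using the smoothness hypotheses $\phi\in W^{1,\infty}(J;W^{4,\infty})\cap W^{3,\infty}(J;W^{1,\infty})$ and $\mu\in L^\infty(J;W^{4,\infty})$, together with Taylor expansion in time about $t^{n+1/2}$ and the second-order accuracy of the BCFD difference operators $D_xd_x$, $D_yd_y$ (the lowest-order Raviart--Thomas/mixed quadrature identity), each residual is $O(\Delta t^2)+O(h_x^2+h_y^2)$ in the relevant discrete norm; the extrapolation $\tilde\phi^{n+1/2}=\tfrac12(3\phi^n-\phi^{n-1})$ contributes $O(\Delta t^2)$ as well. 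This gives equations for $e_\phi^{n+1},e_\mu^{n+1/2},e_r^{n+1}$ of exactly the same structure as the scheme, plus forcing terms bounded by $C(\Delta t^2+h_x^2+h_y^2)$.

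Next I would mimic the energy-stability computation of Theorem \ref{thm: H1_discrete total energy} at the level of the errors. Multiply the $e_\phi$ equation by $e_\mu^{n+1/2}h_xh_y$ and sum, multiply the $e_\mu$ equation by $d_te_\phi^{n+1}h_xh_y$ and sum, multiply the $e_r$ equation by $e_r^{n+1}+e_r^n$, and add. Lemma \ref{le1} turns the discrete Laplacian terms into $\|\mathbf d e_\mu^{n+1/2}\|_{TM}^2$ and into the telescoping quantity $(\|\mathbf d e_\phi^{n+1}\|_{TM}^2-\|\mathbf d e_\phi^n\|_{TM}^2)/(2\Delta t)$, exactly as in (\ref{e_H1_energy analysis2_first term}); the $\lambda$-term telescopes $\|e_\phi\|_m^2$; the SAV coupling terms cancel as in (\ref{e_H1_energy analysis3}), up to the truncation residual of (\ref{e_H1_full-discret3}). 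The one genuinely new term is the nonlinear term $\tfrac{R^{n+1/2}}{\sqrt{E_1^h(\tilde Z)}}F'(\tilde Z)-\tfrac{r^{n+1/2}}{\sqrt{E_1(\tilde\phi)}}F'(\tilde\phi)$ tested against $d_te_\phi^{n+1}$. I would split it as $\tfrac{r^{n+1/2}}{\sqrt{E_1}}$ times $(F'(\tilde Z)-F'(\tilde\phi))$ plus $F'(\tilde Z)$ times the difference of the scalar prefactors, and bound the first using $F\in C^3$ and the a priori $L^\infty$ bound on $\tilde Z$ (so $F'$ is Lipschitz on the relevant range) to get $C\|\tilde e_\phi\|_m$, and the second by noting $E_1^h(\tilde Z)-E_1(\tilde\phi)$ is $O(h^2)$ plus $O(\|\tilde e_\phi\|_m)$ while $E_1\ge C_0>0$ stays bounded below. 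The factor $d_te_\phi^{n+1}$ is controlled by pairing it back with the $e_\mu$ equation — this is precisely why the $H^{-1}$ case is convenient: $d_te_\phi^{n+1}=M(D_xd_x+D_yd_y)e_\mu^{n+1/2}+\text{residual}$, so after one more summation by parts one bounds $\|d_te_\phi^{n+1}\|$-type quantities by $\|\mathbf d e_\mu^{n+1/2}\|_{TM}$, which is the dissipation already on the left-hand side; Cauchy--Schwarz with a small parameter absorbs it.

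Assembling these estimates yields a discrete Gronwall inequality of the form
\begin{equation*}
\aligned
&\mathcal E^{n+1}-\mathcal E^{n}+c\,\Delta t\big(\|\mathbf d e_\mu^{n+1/2}\|_{TM}^2+\|e_\mu^{n+1/2}\|_m^2\big)\\
&\qquad\le C\Delta t\,\mathcal E^{n+1/2}+C\Delta t(\Delta t^2+h_x^2+h_y^2)^2,
\endaligned
\end{equation*}
with $\mathcal E^n=\tfrac{\lambda}{2}\|e_\phi^n\|_m^2+\tfrac12\|\mathbf d e_\phi^n\|_{TM}^2+|e_r^n|^2$, valid \emph{conditionally} on $\|Z^m\|_\infty$ being bounded for $m\le n$. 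Summing in $n$ and applying the discrete Gronwall lemma gives $\mathcal E^{k+1}+\sum\Delta t(\cdots)\le C(\Delta t^2+h_x^2+h_y^2)^2$, which is the claimed bound once we also recover $\|e_\mu^{n+1/2}\|_m$ and $\|\mathbf d e_\mu^{n+1/2}\|_{TM}$ from the summed dissipation. The main obstacle — and the place where the hypothesis $\Delta t\le C(h_x+h_y)$ enters — is closing the induction on $\|Z^{n+1}\|_\infty$: from $\mathcal E^{n+1}\le C(\Delta t^2+h_x^2+h_y^2)^2$ one controls $\|e_\phi^{n+1}\|_m$ and $\|\mathbf d e_\phi^{n+1}\|_{TM}$, and then a discrete inverse (Sobolev-type) inequality on the staggered grid converts this into $\|e_\phi^{n+1}\|_\infty \le C(h_x^{-1})^{?}(\Delta t^2+h^2)$; the constraint $\Delta t\le C(h_x+h_y)$ makes this $o(1)$, so $\|Z^{n+1}\|_\infty\le \|\phi\|_{L^\infty}+1$, which re-establishes the Lipschitz bound on $F'$ used at level $n+1$ and completes the induction. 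I would carry out the $n=0$ base case using the exactness of the initial data $Z^0=\phi_0$ (and a one-sided startup for $\tilde\phi^{1/2}$), and I expect the bookkeeping of the inverse-inequality exponents versus the $\Delta t\le C(h_x+h_y)$ scaling to be the only delicate quantitative point.
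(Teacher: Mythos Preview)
Your overall architecture---derive truncation-error equations, mimic the stability computation on the errors, close an $L^\infty$ bound on $Z^n$ by induction via an inverse inequality and the constraint $\Delta t\le C(h_x+h_y)$---is exactly what the paper does. The treatment of the nonlinear cross term (substitute $d_te_\phi^{n+1}$ using the error form of (\ref{e_H1_full-discret1}) and integrate by parts onto $\mathbf d e_\mu^{n+1/2}$) also matches the paper's Lemma~\ref{Proposition1}.

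There is, however, a genuine gap in the displayed Gronwall inequality. Mimicking the stability proof of Theorem~\ref{thm: H1_discrete total energy} in the $H^{-1}$ case produces only $\|\mathbf d e_\mu^{n+1/2}\|_{TM}^2$ as dissipation, not $\|e_\mu^{n+1/2}\|_m^2$. But when you test the $e_\phi$-equation with $e_\mu^{n+1/2}$, the truncation residuals $T_1^{n+1/2}$ and $T_2^{n+1/2}$ land as $(T_1^{n+1/2},e_\mu^{n+1/2})_m+(T_2^{n+1/2},e_\mu^{n+1/2})_m$, and Cauchy--Schwarz forces a term $C\|e_\mu^{n+1/2}\|_m^2$ on the right that you have nothing on the left to absorb (there is no Poincar\'e for $e_\mu$, which has no reason to be mean-zero). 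In the same vein, the nonlinear estimate deposits $C\|e_\phi^n\|_m^2$ on the right, while your $\mathcal E^n$ only carries $\tfrac{\lambda}{2}\|e_\phi^n\|_m^2$; when $\lambda=0$ (allowed in the paper) the loop does not close. The paper fixes both issues with a \emph{second} energy identity (its Lemma~\ref{Proposition2}): test the $e_\phi$-equation with $e_\phi^{n+1/2}$ and feed the resulting $-M(\mathbf d e_\mu,\mathbf d e_\phi)$ back through the $e_\mu$-equation, which produces
\[
\|e_\phi^{k+1}\|_m^2 + M\sum_{n=0}^{k}\Delta t\,\|e_\mu^{n+1/2}\|_m^2 \le \tfrac{M}{4}\sum_{n=0}^{k}\Delta t\,\|\mathbf d e_\mu^{n+1/2}\|_{TM}^2 + \cdots,
\]
and the $\tfrac{M}{4}$ term is then absorbed by the dissipation from the first identity. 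Only after adding this second estimate do you get the Gronwall inequality you wrote down.

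A smaller technical point: your plan to control every occurrence of $d_te_\phi^{n+1}$ by substituting the error equation runs into trouble for the spatial truncation terms $(T_3^{n+1/2},d_te_\phi^{n+1})_m$ and $(\delta_x(\phi)^{n+1/2},d_td_xe_\phi^{n+1})_x$, since after integration by parts you would need $\|\mathbf d T_3\|$ and similar quantities to be $O(h^2)$, which costs an extra spatial derivative of $\phi$ beyond the hypotheses. The paper instead uses a summation-by-parts \emph{in time} (its (\ref{e_H1_error_estimate16})--(\ref{e_H1_error_estimate18})) to move $d_t$ onto $T_3$ and $\delta_x(\phi)$, so that only $\|d_tT_3^{n+1/2}\|_m\le C\|\phi\|_{W^{1,\infty}(J;W^{4,\infty})}h^2$ is needed, consistent with the stated regularity.
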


We shall split the proof of the above results into three lemmas below.

\medskip
\begin{lemma}\label{Proposition1}
Under the condition of Theorem \ref{thm: H1_error_estimates}, there exists a positive constant $C$ independent of $h_x$, $h_y$ and $\Delta t$ such that
\begin{equation}\label{e_H1_error_estimate19}
\aligned
&(e_r^{k+1})^2+\frac{1}{2}\|\textbf{d}e_{\phi}^{k+1}\|^2_{TM}+\frac{\lambda}{2} \|e_{\phi}^{k+1}\|_m^2+\frac{M}{2}\sum_{n=0}^{k}\Delta t\|\textbf{d}e_{\mu}^{n+1/2}\|^2_{TM}\\
\leq&C\sum_{n=0}^{k+1}\Delta t\|\textbf{d}e_{\phi}^n\|_{TM}^2
+\frac{M}{2}\sum_{n=0}^{k+1}\Delta t\|e_{\mu}^{n+1/2}\|_{m}^2\\
&+C\sum_{n=0}^{k+1}\Delta t\|e_{\phi}^n\|_m^2+C\sum_{n=0}^{k+1}\Delta t(e_r^{n})^2\\
&+C(\|\phi\|^2_{W^{1,\infty}(J;W^{4,\infty}(\Omega))}+\|\mu\|_{L^{\infty}(J;W^{4,\infty}(\Omega))}^2 )(h_x^4+h_y^4)\\
&+C\|\phi\|_{W^{3,\infty}(J;W^{1,\infty}(\Omega))}^2\Delta t^4.
\endaligned
\end{equation}
\end{lemma}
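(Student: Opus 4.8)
The plan is to mimic, at the fully discrete level, the energy-stability computation of Theorem~\ref{thm: H1_discrete total energy}, but applied to the error equations rather than to the scheme itself. First I would write down the equations satisfied by $(e_\phi^n,e_\mu^n,e_r^n)$ by subtracting the exact-solution identities (obtained by evaluating \eqref{e_true solution1}--\eqref{e_true solution3} at $t^{n+1/2}$ and using Taylor expansions together with the BCFD truncation estimates for $D_xd_x$, $D_yd_y$ acting on $\phi$ and $\mu$) from the scheme \eqref{e_H1_full-discret1}--\eqref{e_H1_full-discret3}. This produces, schematically,
\begin{align*}
[d_t e_\phi]^{n+1} &= M[D_xd_x e_\mu + D_yd_y e_\mu]^{n+1/2} + \rho_1^{n+1/2},\\
e_\mu^{n+1/2} &= -[D_xd_x e_\phi + D_yd_y e_\phi]^{n+1/2} + \lambda e_\phi^{n+1/2} + \mathcal{N}^{n+1/2} + \rho_2^{n+1/2},\\
d_t e_r^{n+1} &= \tfrac12 (\text{approx of }F')\cdot(\cdot,d_t e_\phi^{n+1})_m + \cdots + \rho_3^{n+1/2},
\end{align*}
where $\mathcal{N}^{n+1/2}$ collects the difference of the two nonlinear terms $\frac{R^{n+1/2}}{\sqrt{E_1^h(\tilde Z^{n+1/2})}}F'(\tilde Z^{n+1/2})$ and its continuous analogue, and $\rho_1,\rho_2,\rho_3$ are consistency remainders bounded by $C(h_x^2+h_y^2+\Delta t^2)$ in the relevant discrete norms, using the regularity hypotheses $\phi\in W^{1,\infty}(J;W^{4,\infty})\cap W^{3,\infty}(J;W^{1,\infty})$ and $\mu\in L^\infty(J;W^{4,\infty})$.

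Next I would run the discrete energy argument on these error equations: pair the first error equation with $e_\mu^{n+1/2}h_xh_y$ summed over the grid and apply Lemma~\ref{le1} to get $-M\|\textbf{d}e_\mu^{n+1/2}\|_{TM}^2$ plus the remainder contribution; pair the second with $d_te_\phi^{n+1}h_xh_y$ and use Lemma~\ref{le1} to produce the telescoping terms $\frac{1}{2\Delta t}(\|\textbf{d}e_\phi^{n+1}\|_{TM}^2-\|\textbf{d}e_\phi^n\|_{TM}^2)$ and $\frac{\lambda}{2\Delta t}(\|e_\phi^{n+1}\|_m^2-\|e_\phi^n\|_m^2)$; and multiply the third by $e_r^{n+1}+e_r^n$ to get $\frac{1}{\Delta t}((e_r^{n+1})^2-(e_r^n)^2)$. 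Adding the three identities, the nonlinear cross terms cancel in exactly the way they did in Theorem~\ref{thm: H1_discrete total energy}, up to error contributions coming from the $O(\Delta t^2)$ mismatch in the auxiliary-variable coupling. Then multiply by $\Delta t$, sum over $n=0,\dots,k$, and use $e_\phi^0=0$, $e_r^0=0$ (from the initialization). The right-hand side is controlled termwise: the consistency remainders by Cauchy--Schwarz give the $(h_x^4+h_y^4)$ and $\Delta t^4$ terms; the nonlinear-term differences and the auxiliary-variable mismatch are bounded, via Cauchy--Schwarz and Young's inequality, by $C\sum\Delta t(\|\textbf{d}e_\phi^n\|_{TM}^2+\|e_\phi^n\|_m^2+(e_r^n)^2) + \frac{M}{2}\sum\Delta t\|e_\mu^{n+1/2}\|_m^2$, after absorbing part of $\|\textbf{d}e_\mu^{n+1/2}\|_{TM}^2$ against the good term on the left.

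The main obstacle is handling the difference of the nonlinear terms $\mathcal{N}^{n+1/2}$ without a global Lipschitz assumption on $F'$: one must expand
$$\frac{R^{n+1/2}}{\sqrt{E_1^h(\tilde Z^{n+1/2})}}F'(\tilde Z^{n+1/2}) - \frac{r^{n+1/2}}{\sqrt{E_1(\tilde\phi^{n+1/2})}}F'(\tilde\phi^{n+1/2})$$
into three pieces — one proportional to $e_r^{n+1/2}$, one to the difference $\frac{1}{\sqrt{E_1^h}}-\frac{1}{\sqrt{E_1}}$ (controlled using the positive lower bound $C_0$ on $E_1$ and a discrete-vs-continuous quadrature estimate), and one to $F'(\tilde Z^{n+1/2})-F'(\tilde\phi^{n+1/2})$. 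The last requires a bound on $\|F'(\tilde Z^{n+1/2})\|$ and $\|F''\|$ on the range of the numerical solution, i.e. an $L^\infty$ bound on $\tilde Z^{n+1/2}$; at this stage I would invoke such a bound as a hypothesis to be discharged later (the induction on $\|Z^n\|_\infty$ promised in the introduction and presumably carried out in a subsequent lemma), so that within this lemma $F'$, $F''$ may be treated as bounded on the relevant set, making $\|F'(\tilde Z^{n+1/2})-F'(\tilde\phi^{n+1/2})\|_m \le C\|e_\phi\|$-type estimates (after also controlling the $O(\Delta t^2)$ difference $\tilde\phi^{n+1/2}-\phi(t^{n+1/2})$ and the extrapolation error in $\tilde Z - \tilde\phi$ in terms of $e_\phi^n,e_\phi^{n-1}$). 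Collecting everything yields precisely \eqref{e_H1_error_estimate19}.
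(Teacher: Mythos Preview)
Your overall architecture matches the paper's: form the three error equations, test against $e_\mu^{n+1/2}$, $d_te_\phi^{n+1}$, and $e_r^{n+1}+e_r^n$ respectively, extract the telescoping pieces, split the nonlinear difference into the part proportional to $e_r^{n+1/2}$ (which pairs with the $r$-error identity), the energy-denominator mismatch, and the $F'(\tilde Z)-F'(\tilde\phi)$ part handled under the $\|Z^n\|_\infty\le C_*$ hypothesis to be discharged later. All of this is correct.

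There is, however, a genuine gap in how you treat the consistency remainders. After testing the $\mu$-error equation with $d_te_\phi^{n+1}$ you inherit terms of the form $(\rho_2^{n+1/2},d_te_\phi^{n+1})_m$ and, from the spatial summation by parts on the discrete Laplacian, $(\delta_x(\phi)^{n+1/2},d_td_xe_\phi^{n+1})_x$ and its $y$-analogue. These \emph{cannot} be handled by ``Cauchy--Schwarz'' as you write, because there is no control of $\|d_te_\phi^{n+1}\|_m$ or $\|d_t\textbf{d}e_\phi^{n+1}\|_{TM}$ anywhere on the left. Substituting the first error equation for $d_te_\phi$ and integrating by parts in space would shift a spatial derivative onto the truncation errors, demanding $W^{5,\infty}$ regularity that is not assumed. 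The paper's device is instead a summation by parts \emph{in time}:
\[
\sum_{n=0}^k \Delta t\,(f^{n+1/2},d_tg^{n+1})=-\sum_{n=1}^k\Delta t\,(d_tf^{n+1/2},g^n)+(f^{k+1/2},g^{k+1})-(f^{1/2},g^0),
\]
which moves $d_t$ onto $T_3$ and $\delta(\phi)$, where it is harmless because $\|d_tT_3\|_m\le C(h_x^2+h_y^2)\|\phi\|_{W^{1,\infty}(J;W^{4,\infty})}$. This is precisely why the regularity hypothesis reads $W^{1,\infty}(J;W^{4,\infty})$ rather than $L^\infty(J;W^{4,\infty})$, and the boundary terms at $n=k$ are absorbed into the $\frac{\lambda}{2}\|e_\phi^{k+1}\|_m^2$ and $\frac12\|\textbf{d}e_\phi^{k+1}\|_{TM}^2$ on the left of \eqref{e_H1_error_estimate19}. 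A smaller but related point: the reappearance of $\|\textbf{d}e_\mu^{n+1/2}\|_{TM}^2$ on the right (so that only $M/2$ survives on the left) comes not from a direct Young's inequality on $(\mathcal{N},d_te_\phi)$ but from substituting the first error equation for $d_te_\phi$ in the first two pieces of your nonlinear split and then integrating by parts in space to obtain $(d_x[\cdots],d_xe_\mu)_x$; this substitution is also what produces the $\|\textbf{d}e_\phi^n\|_{TM}^2$ terms on the right via the discrete chain rule on $d_xF'(\tilde Z)$, and should be made explicit.
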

\begin{proof}
Denote
\begin{equation*}
\aligned
&\delta_x(\phi)=d_x\phi-\frac{\partial \phi}{\partial x},~\delta_y(\phi)=d_y\phi-\frac{\partial \phi}{\partial y},\\
&\delta_x(\mu)=d_x\mu-\frac{\partial \mu}{\partial x},~\delta_y(\mu)=d_y\mu-\frac{\partial \mu}{\partial y}.
\endaligned
\end{equation*}

Subtracting equation (\ref{e_true solution1}) from equation (\ref{e_H1_full-discret1}), we obtain
\begin{equation}\label{e_H1_error_estimate2}
\aligned
~[d_te_{\phi}]_{i,j}^{n+1}=&M[D_x(d_xe_{\mu}+\delta_x(\mu))+D_y(d_ye_{\mu}+\delta_y(\mu))]_{i,j}^{n+1/2}\\
&+T_{1,i,j}^{n+1/2}+T_{2,i,j}^{n+1/2},\\
\endaligned
\end{equation}
where 
\begin{equation}\label{e_error_estimate_E1}
\aligned
T_{1,i,j}^{n+1/2}=\frac{\partial \phi}{\partial t}\big|_{i,j}^{n+1/2}-[d_t\phi]_{i,j}^{n+1}\leq
C\|\phi\|_{W^{3,\infty}(J;L^{\infty}(\Omega))}\Delta t^2,
\endaligned
\end{equation}

\begin{equation}\label{e_error_estimate_E2}
\aligned
T_{2,i,j}^{n+1/2}&=M[D_x\frac{\partial \mu}{\partial x}+D_y\frac{\partial \mu}{\partial y}]_{i,j}^{n+1/2}-M\Delta \mu^{n+1/2}_{i,j}\\
&\leq CM(h_x^2+h_y^2)\|\mu\|_{L^{\infty}(J;W^{4,\infty}(\Omega))}.
\endaligned
\end{equation}
Subtracting equation (\ref{e_true solution2}) from equation (\ref{e_H1_full-discret2}) leads to
\begin{equation}\label{e_H1_error_estimate3}
\aligned
e_{\mu,i,j}^{n+1/2}=&-[D_x(d_xe_{\phi}+\delta_x(\phi))+D_y(d_ye_{\phi}+\delta_y(\phi))]_{i,j}^{n+1/2}\\
&+\lambda e_{\phi,i,j}^{n+1/2}+\frac{R^{n+1/2}}{\sqrt{E_1^h(\tilde{Z}^{n+1/2})}}F^{\prime}(\tilde{Z}_{i,j}^{n+1/2})\\
&-\frac{r^{n+1/2}}{\sqrt{E_1(\phi^{n+1/2})}}F^{\prime}(\phi_{i,j}^{n+1/2})+T_{3,i,j}^{n+1/2},
\endaligned
\end{equation}
where 
\begin{equation}\label{e_error_estimate_E3}
\aligned
T_{3,i,j}^{n+1/2}&=\Delta \phi^{n+1/2}_{i,j}-[D_x\frac{\partial \phi}{\partial x}+D_y\frac{\partial \phi}{\partial y}]_{i,j}^{n+1/2}\\
&\leq C(h_x^2+h_y^2)\|\phi\|_{L^{\infty}(J;W^{4,\infty}(\Omega))}.
\endaligned
\end{equation}
Subtracting equation (\ref{e_true solution3}) from equation (\ref{e_H1_full-discret3}) gives that
\begin{equation}\label{e_H1_error_estimate4}
\aligned
d_te_r^{n+1}=&\frac{1}{2\sqrt{E_1^h(\tilde{Z}^{n+1/2})}}(F^{\prime}(\tilde{Z}^{n+1/2}),
d_tZ^{n+1})_m\\
&-\frac{1}{2\sqrt{E_1(\phi^{n+1/2})}}\int_{\Omega}F^{\prime}(\phi^{n+1/2})\phi^{n+1/2}_t d\textbf{x}+T_{4}^{n+1/2},
\endaligned
\end{equation}
where 
\begin{equation}\label{e_error_estimate_E4}
\aligned
T_4^{n+1/2}=r_t^{n+1/2}-d_tr^{n+1}\leq
C\|r\|_{W^{3,\infty}(J)}\Delta t^2.
\endaligned
\end{equation}
Multiplying equation (\ref{e_H1_error_estimate2}) by $e_{\mu,i,j}^{n+1/2}h_xh_y$, and making summation on $i,j$ for $1\leq i\leq N_x,~1\leq j\leq N_y$, we have 
\begin{equation}\label{e_H1_error_estimate5}
\aligned
&(d_te_{\phi}^{n+1},e_{\mu}^{n+1/2})_m\\
=&M\left(D_x(d_xe_{\mu}+\delta_x(\mu))^{n+1/2}+D_y(d_ye_{\mu}+\delta_y(\mu))^{n+1/2},e_{\mu}^{n+1/2}\right)_m\\
&+(T_1^{n+1/2}, e_{\mu}^{n+1/2})_m+(T_2^{n+1/2}, e_{\mu}^{n+1/2})_m.
\endaligned
\end{equation} 
Using Lemma \ref{le1}, we can write the first term on the right hand side of equation (\ref{e_H1_error_estimate5}) as:
\begin{equation}\label{e_H1_error_estimate6}
\aligned
&M\left(D_x(d_xe_{\mu}+\delta_x(\mu))^{n+1/2}+D_y(d_ye_{\mu}+\delta_y(\mu))^{n+1/2},e_{\mu}^{n+1/2}\right)_m\\
=&-M\left((d_xe_{\mu}+\delta_x(\mu))^{n+1/2},d_xe_{\mu}^{n+1/2}\right)_x-M\left((d_ye_{\mu}+\delta_y(\mu))^{n+1/2},d_ye_{\mu}^{n+1/2}\right)_y\\
=&-M\|\textbf{d}e_{\mu}^{n+1/2}\|^2_{TM}-M(\delta_x(\mu)^{n+1/2},d_xe_{\mu}^{n+1/2})_x\\
&-M(\delta_y(\mu)^{n+1/2},d_ye_{\mu}^{n+1/2})_y.
\endaligned
\end{equation} 
Thanks to Cauchy-Schwarz inequality, the last two terms on the right hand side of equation (\ref{e_H1_error_estimate6}) can be transformed into:
\begin{equation}\label{e_H1_error_estimate6_added}
\aligned
&-M(\delta_x(\mu)^{n+1/2},d_xe_{\mu}^{n+1/2})_x-M(\delta_y(\mu)^{n+1/2},d_ye_{\mu}^{n+1/2})_y\\
\leq&\frac{M}{6}\|\textbf{d}\mu^{n+1/2}\|_{TM}^2+C\|\mu\|_{L^{\infty}(J;W^{3,\infty}(\Omega))}^2(h_x^4+h_y^4).
\endaligned
\end{equation} 

Multiplying equation (\ref{e_H1_error_estimate3}) by $d_te_{\phi,i,j}^{n+1}h_xh_y$, and making summation on $i,j$ for $1\leq i\leq N_x,~1\leq j\leq N_y$, we have 
\begin{equation}\label{e_H1_error_estimate7}
\aligned
&(e_{\mu}^{n+1/2},d_te_{\phi}^{n+1})_m=-(D_x(d_xe_{\phi}+\delta_x(\phi))^{n+1/2}+D_y(d_ye_{\phi}+\delta_y(\phi))^{n+1/2}, d_te_{\phi}^{n+1})_m\\
&+(\frac{R^{n+1/2}}{\sqrt{E_1^h(\tilde{Z}^{n+1/2})}}F^{\prime}(\tilde{Z}^{n+1/2})
-\frac{r^{n+1/2}}{\sqrt{E_1(\phi^{n+1/2})}}F^{\prime}(\phi^{n+1/2}), d_te_{\phi}^{n+1})_m\\
&+\lambda (e_{\phi}^{n+1/2},d_te_{\phi}^{n+1})_m+(T_3^{n+1/2},d_te_{\phi}^{n+1})_m.
\endaligned
\end{equation}
Similar to the estimate of equation (\ref{e_H1_energy analysis2_first term}), the 
first term on the right hand side of equation (\ref{e_H1_error_estimate7}) can be transformed into the following:
\begin{equation}\label{e_H1_error_estimate8}
\aligned
&-(D_x(d_xe_{\phi}+\delta_x(\phi))^{n+1/2}+D_y(d_ye_{\phi}+\delta_y(\phi))^{n+1/2}, d_te_{\phi}^{n+1})_m\\
=&(d_xe_{\phi}^{n+1/2},d_td_xe_{\phi}^{n+1})_x+(d_ye_{\phi}^{n+1/2},d_td_ye_{\phi}^{n+1})_y\\
&+(\delta_x(\phi)^{n+1/2},d_td_xe_{\phi}^{n+1/2})_x
+(\delta_y(\phi)^{n+1/2},d_td_ye_{\phi}^{n+1/2})_y\\
=&\frac{\|\textbf{d}e_{\phi}^{n+1}\|^2_{TM}-\|\textbf{d}e_{\phi}^n\|_{TM}^2}{2\Delta t}
+(\delta_x(\phi)^{n+1/2},d_td_xe_{\phi}^{n+1/2})_x\\
&+(\delta_y(\phi)^{n+1/2},d_td_ye_{\phi}^{n+1/2})_y.
\endaligned
\end{equation} 
The second term on the right hand side of equation (\ref{e_H1_error_estimate7}) can be
rewritten as follows:
\begin{equation}\label{e_H1_error_estimate10}
\aligned
&(\frac{R^{n+1/2}}{\sqrt{E_1^h(\tilde{Z}^{n+1/2})}}F^{\prime}(\tilde{Z}^{n+1/2})
-\frac{r^{n+1/2}}{\sqrt{E_1(\phi^{n+1/2})}}F^{\prime}(\phi^{n+1/2}), d_te_{\phi}^{n+1})_m\\
=&r^{n+1/2}(\frac{F^{\prime}(\tilde{Z}^{n+1/2})}{\sqrt{E_1^h(\tilde{Z}^{n+1/2})}}-\frac{F^{\prime}(\tilde{\phi}^{n+1/2})}{\sqrt{E_1^h(\tilde{\phi}^{n+1/2})}}, d_te_{\phi}^{n+1})_m\\
&+r^{n+1/2}(\frac{F^{\prime}(\tilde{\phi}^{n+1/2})}{\sqrt{E_1^h(\tilde{\phi}^{n+1/2})}}-\frac{F^{\prime}(\phi^{n+1/2})}{\sqrt{E_1(\phi^{n+1/2})}}, d_te_{\phi}^{n+1})_m\\
&+e_{r}^{n+1/2}(\frac{F^{\prime}(\tilde{Z}^{n+1/2})}{\sqrt{E_1^h(\tilde{Z}^{n+1/2})}},
d_te_{\phi}^{n+1})_m.
\endaligned
\end{equation}
Recalling equation (\ref{e_H1_error_estimate2}), the first term on the right hand side of equation (\ref{e_H1_error_estimate10}) can be transformed into the following:
\begin{equation}\label{e_H1_error_estimate10_added1}
\aligned
&r^{n+1/2}(\frac{F^{\prime}(\tilde{Z}^{n+1/2})}{\sqrt{E_1^h(\tilde{Z}^{n+1/2})}}-\frac{F^{\prime}(\tilde{\phi}^{n+1/2})}{\sqrt{E_1^h(\tilde{\phi}^{n+1/2})}}, d_te_{\phi}^{n+1})_m\\
=&Mr^{n+1/2}(\frac{F^{\prime}(\tilde{Z}^{n+1/2})}{\sqrt{E_1^h(\tilde{Z}^{n+1/2})}}-\frac{F^{\prime}(\tilde{\phi}^{n+1/2})}{\sqrt{E_1^h(\tilde{\phi}^{n+1/2})}}, D_x(d_xe_{\mu}+\delta_x(\mu))^{n+1/2})_m\\
&+Mr^{n+1/2}(\frac{F^{\prime}(\tilde{Z}^{n+1/2})}{\sqrt{E_1^h(\tilde{Z}^{n+1/2})}}-\frac{F^{\prime}(\tilde{\phi}^{n+1/2})}{\sqrt{E_1^h(\tilde{\phi}^{n+1/2})}}, D_y(d_ye_{\mu}+\delta_y(\mu))^{n+1/2})_m\\
&+r^{n+1/2}(\frac{F^{\prime}(\tilde{Z}^{n+1/2})}{\sqrt{E_1^h(\tilde{Z}^{n+1/2})}}-\frac{F^{\prime}(\tilde{\phi}^{n+1/2})}{\sqrt{E_1^h(\tilde{\phi}^{n+1/2})}}, T_1^{n+1/2}+T_2^{n+1/2})_m.
\endaligned
\end{equation}
Next, we shall first make the hypothesis that there exists a positive constant $C_*$ such that 
\begin{equation}\label{e_H1_error_estimate_boundedness_added1}
\aligned
\|Z^n\|_{\infty}\leq C_*.
\endaligned
\end{equation}
This hypothesis will be verified in Lemma \ref{hyperthesis} using a bootstrap argument.

Since $F(\phi)\in C^3(\mathbb{R})$, we have
\begin{equation}\label{e_H1_error_estimate10_added3}
\aligned
&\frac{d_xF^{\prime}(\tilde{Z}^{n+1/2})}{\sqrt{E_1^h(\tilde{Z}^{n+1/2})}}-\frac{d_xF^{\prime}(\tilde{\phi}^{n+1/2})}{\sqrt{E_1^h(\tilde{\phi}^{n+1/2})}}\\
=&d_xF^{\prime}(\tilde{\phi}^{n+1/2})\frac{E_1^h(\tilde{\phi}^{n+1/2})-E_1^h(\tilde{Z}^{n+1/2})}{\sqrt{E_1^h(\tilde{Z}^{n+1/2})E_1^h(\tilde{\phi}^{n+1/2})(E_1^h(\tilde{Z}^{n+1/2})+E_1^h(\tilde{\phi}^{n+1/2}))}} \\
&+\frac{d_xF^{\prime}(\tilde{Z}^{n+1/2})-d_xF^{\prime}(\tilde{\phi}^{n+1/2})}{\sqrt{E_1^h(\tilde{Z}^{n+1/2})}}.
\endaligned
\end{equation}
 Using  above and the Cauchy-Schwartz inequality, we can deduce that 
\begin{equation}\label{e_H1_error_estimate10_added2}
\aligned
&Mr^{n+1/2}(\frac{F^{\prime}(\tilde{Z}^{n+1/2})}{\sqrt{E_1^h(\tilde{Z}^{n+1/2})}}-\frac{F^{\prime}(\tilde{\phi}^{n+1/2})}{\sqrt{E_1^h(\tilde{\phi}^{n+1/2})}}, D_x(d_xe_{\mu}+\delta_x(\mu))^{n+1/2})_m\\
=&-Mr^{n+1/2}(\frac{d_xF^{\prime}(\tilde{Z}^{n+1/2})}{\sqrt{E_1^h(\tilde{Z}^{n+1/2})}}-\frac{d_xF^{\prime}(\tilde{\phi}^{n+1/2})}{\sqrt{E_1^h(\tilde{\phi}^{n+1/2})}}, (d_xe_{\mu}+\delta_x(\mu))^{n+1/2})_x\\
\leq& \frac{M}{6}\|d_xe_{\mu}^{n+1/2}\|_{x}^2+C\|r\|^2_{L^{\infty}(J)}(\|e_{\phi}^{n}\|_m^2+\|e_{\phi}^{n-1}\|_m^2)\\
&+C\|r\|^2_{L^{\infty}(J)}(\|d_xe_{\phi}^{n}\|_x^2+\|d_xe_{\phi}^{n-1}\|_x^2)\\
&+C\|\mu\|_{L^{\infty}(J;W^{3,\infty}(\Omega))}^2(h_x^4+h_y^4).
\endaligned
\end{equation}
Similarly we can obtain
\begin{equation}\label{e_H1_error_estimate10_added4}
\aligned
&Mr^{n+1/2}(\frac{F^{\prime}(\tilde{Z}^{n+1/2})}{\sqrt{E_1^h(\tilde{Z}^{n+1/2})}}-\frac{F^{\prime}(\tilde{\phi}^{n+1/2})}{\sqrt{E_1^h(\tilde{\phi}^{n+1/2})}}, D_y(d_ye_{\mu}+\delta_y(\mu))^{n+1/2})_m\\
\leq& \frac{M}{6}\|d_ye_{\mu}^{n+1/2}\|_{y}^2+C\|r\|^2_{L^{\infty}(J)}(\|e_{\phi}^{n}\|_m^2+\|e_{\phi}^{n-1}\|_m^2)\\
&+C\|r\|^2_{L^{\infty}(J)}(\|d_ye_{\phi}^{n}\|_y^2+\|d_ye_{\phi}^{n-1}\|_y^2)\\
&+C\|\mu\|_{L^{\infty}(J;W^{3,\infty}(\Omega))}^2(h_x^4+h_y^4).
\endaligned
\end{equation}
Then equation (\ref{e_H1_error_estimate10_added1}) can be estimated by:
\begin{equation}\label{e_H1_error_estimate10_added5}
\aligned
&r^{n+1/2}(\frac{F^{\prime}(\tilde{Z}^{n+1/2})}{\sqrt{E_1^h(\tilde{Z}^{n+1/2})}}-\frac{F^{\prime}(\tilde{\phi}^{n+1/2})}{\sqrt{E_1^h(\tilde{\phi}^{n+1/2})}}, d_te_{\phi}^{n+1})_m\\
\leq &\frac{M}{6}\|\textbf{d}e_{\mu}^{n+1/2}\|_{TM}^2+C\|r\|_{L^{\infty}(J)}(\|e_{\phi}^{n}\|_m^2+\|e_{\phi}^{n-1}\|_m^2)\\
&+C\|r\|_{L^{\infty}(J)}(\|\textbf{d}e_{\phi}^{n}\|_{TM}^2+\|\textbf{d}e_{\phi}^{n-1}\|_{TM}^2)
\\
&+C\|\mu\|_{L^{\infty}(J;W^{4,\infty}(\Omega))}^2(h_x^4+h_y^4)+C\|\phi\|_{W^{3,\infty}(J;L^{\infty}(\Omega))}^2\Delta t^4.
\endaligned
\end{equation}
Similar to (\ref{e_H1_error_estimate10_added1}),
the second term on the right hand side of equation (\ref{e_H1_error_estimate10}) can be controlled by:
\begin{equation}\label{e_H1_error_estimate10_added6}
\aligned
&r^{n+1/2}(\frac{F^{\prime}(\tilde{\phi}^{n+1/2})}{\sqrt{E_1^h(\tilde{\phi}^{n+1/2})}}-\frac{F^{\prime}(\phi^{n+1/2})}{\sqrt{E_1(\phi^{n+1/2})}}, d_te_{\phi}^{n+1})_m\\
\leq &\frac{M}{6}\|\textbf{d}e_{\mu}^{n+1/2}\|_{TM}^2+C\|\mu\|_{L^{\infty}(J;W^{4,\infty}(\Omega))}^2(h_x^4+h_y^4)\\
&+C\|\phi\|_{L^{\infty}(J;W^{2,\infty}(\Omega))}^2(h_x^4+h_y^4)\\
&+C\|\phi\|_{W^{3,\infty}(J;W^{1,\infty}(\Omega))}^2\Delta t^4.
\endaligned
\end{equation}
The third term on the right hand side of equation (\ref{e_H1_error_estimate7}) can be
estimated by:
\begin{equation}\label{e_H1_error_estimate9}
\aligned
&\lambda (e_{\phi}^{n+1/2},d_te_{\phi}^{n+1})_m=\lambda\frac{\|e_{\phi}^{n+1}\|_m^2-\|e_{\phi}^{n}\|_m^2 }{2\Delta t}.
\endaligned
\end{equation} 
Multiplying equation (\ref{e_H1_error_estimate4}) by $e_{r}^{n+1}+e_{r}^{n}$ leads to 
\begin{equation}\label{e_H1_error_estimate11}
\aligned
\frac{(e_r^{n+1})^2-(e_r^{n})^2}{\Delta t}=&
\frac{e_r^{n+1/2}}{\sqrt{E_1^h(\tilde{Z}^{n+1/2})}}(F^{\prime}(\tilde{Z}^{n+1/2}),
d_tZ^{n+1})_m\\
&-\frac{e_r^{n+1/2}}{\sqrt{E_1(\phi^{n+1/2})}}\int_{\Omega}F^{\prime}(\phi^{n+1/2})\phi^{n+1/2}_t d\textbf{x}\\
&+T_{4}^{n+1/2}\cdot (e_{r}^{n+1}+e_{r}^{n}).
\endaligned
\end{equation}
The first and second terms on the right hand side of equation (\ref{e_H1_error_estimate11}) can be transformed into:
\begin{equation}\label{e_H1_error_estimate13}
\aligned
&\frac{e_r^{n+1/2}}{\sqrt{E_1^h(\tilde{Z}^{n+1/2})}}(F^{\prime}(\tilde{Z}^{n+1/2}),
d_tZ^{n+1})_m-\frac{e_r^{n+1/2}}{\sqrt{E_1(\phi^{n+1/2})}}\int_{\Omega}F^{\prime}(\phi^{n+1/2})\phi^{n+1/2}_t d\textbf{x}\\
=&\frac{e_{r}^{n+1/2}}{\sqrt{E_1(\phi^{n+1/2})}}\left((F^{\prime}(\phi^{n+1/2}),d_t\phi^{n+1})_m-\int_{\Omega}F^{\prime}(\phi^{n+1/2})\phi^{n+1/2}_t d\textbf{x}\right)\\
&+\frac{e_{r}^{n+1/2}}{\sqrt{E_1^h(\tilde{Z}^{n+1/2})}}(F^{\prime}(\tilde{Z}^{n+1/2}),
d_te_{\phi}^{n+1})_m\\
&+e_{r}^{n+1/2}(\frac{F^{\prime}(\tilde{Z}^{n+1/2})}{\sqrt{E_1^h(\tilde{Z}^{n+1/2})}}-
\frac{F^{\prime}(\phi^{n+1/2})}{\sqrt{E_1(\phi^{n+1/2})}},d_t\phi^{n+1})_m.
\endaligned
\end{equation}
Since $F(\phi)\in C^3(\mathbb{R})$, we have that
\begin{equation}\label{e_H1_error_estimate14}
\aligned
&e_{r}^{n+1/2}(\frac{F^{\prime}(\tilde{Z}^{n+1/2})}{\sqrt{E_1^h(\tilde{Z}^{n+1/2})}}-
\frac{F^{\prime}(\phi^{n+1/2})}{\sqrt{E_1(\phi^{n+1/2})}},d_t\phi^{n+1})_m\\
=&e_{r}^{n+1/2}(\frac{F^{\prime}(\tilde{Z}^{n+1/2})}{\sqrt{E_1^h(\tilde{Z}^{n+1/2})}}-
\frac{F^{\prime}(\phi^{n+1/2})}{\sqrt{E_1^h(\tilde{Z}^{n+1/2})}},d_t\phi^{n+1})_m\\
&+e_{r}^{n+1/2}(\frac{F^{\prime}(\phi^{n+1/2})}{\sqrt{E_1^h(\tilde{Z}^{n+1/2})}}-
\frac{F^{\prime}(\phi^{n+1/2})}{\sqrt{E_1(\phi^{n+1/2})}},d_t\phi^{n+1})_m\\
\leq &C(e_r^{n+1/2})^2+C\|\phi\|^2_{W^{1,\infty}(J;L^{\infty}(\Omega))}(\|e_{\phi}^{n}\|_m^2+\|e_{\phi}^{n-1}\|_m^2).
\endaligned
\end{equation}
Recalling the midpoint approximation property of the rectangle quadrature formula, we can obtain that 
\begin{equation}\label{e_H1_error_estimate15}
\aligned
&\frac{e_{r}^{n+1/2}}{\sqrt{E_1(\phi^{n+1/2})}}\left((F^{\prime}(\phi^{n+1/2}),d_t\phi^{n+1})_m-\int_{\Omega}F^{\prime}(\phi^{n+1/2})\phi^{n+1/2}_t d\textbf{x}\right)\\
\leq &C(e_r^{n+1/2})^2+
C\|\phi\|^2_{W^{1,\infty}(J;W^{2,\infty}(\Omega))}(h_x^4+h_y^4).
\endaligned
\end{equation}
Combining equation (\ref{e_H1_error_estimate11}) with equations (\ref{e_H1_error_estimate5})-(\ref{e_H1_error_estimate15}) and using Cauchy-Schwarz
inequality result in
\begin{eqnarray}
&&\frac{(e_r^{n+1})^2-(e_r^{n})^2}{\Delta t}+\frac{\|\textbf{d}e_{\phi}^{n+1}\|^2_{TM}-\|\textbf{d}e_{\phi}^n\|_{TM}^2}{2\Delta t} \nonumber\\
&&+\lambda\frac{\|e_{\phi}^{n+1}\|_m^2-\|e_{\phi}^{n}\|_m^2 }{2\Delta t}+M\|\textbf{d}e_{\mu}^{n+1/2}\|^2_{TM}\label{e_H1_error_estimate12}\\
&\leq&\frac{M}{2}\|\textbf{d}e_{\mu}^{n+1/2}\|_{TM}^2+C\|r\|^2_{L^{\infty}(J)}(\|e_{\phi}^{n}\|_m^2+\|e_{\phi}^{n-1}\|_m^2) \nonumber\\
&&+C\|r\|^2_{L^{\infty}(J)}(\|\textbf{d}e_{\phi}^{n}\|_{TM}^2+\|\textbf{d}e_{\phi}^{n-1}\|_{TM}^2)
\nonumber\\
&&-(\delta_x(\phi)^{n+1/2},d_td_xe_{\phi}^{n+1/2})_x
-(\delta_y(\phi)^{n+1/2},d_td_ye_{\phi}^{n+1/2})_y\nonumber\\
&&+(T_3^{n+1/2},d_te_{\phi}^{n+1})_m-(T_1^{n+1/2}, e_{\mu}^{n+1/2})_m\nonumber\\
&&-(T_2^{n+1/2}, e_{\mu}^{n+1/2})_m+T_{4}^{n+1/2}\cdot (e_{r}^{n+1}+e_{r}^{n})\nonumber\\
&&+C(e_r^{n+1/2})^2+C\|\phi\|^2_{W^{1,\infty}(J;L^{\infty}(\Omega))}(\|e_{\phi}^{n}\|_m^2+\|e_{\phi}^{n-1}\|_m^2)\nonumber\\
&&+C(\|\phi\|^2_{W^{1,\infty}(J;W^{2,\infty}(\Omega))}+\|\mu\|_{L^{\infty}(J;W^{4,\infty}(\Omega))}^2 )(h_x^4+h_y^4)\nonumber\\
&&+C\|\phi\|_{W^{3,\infty}(J;W^{1,\infty}(\Omega))}^2\Delta t^4.
\end{eqnarray}
From the discrete-integration-by-parts, 
\begin{equation}\label{e_H1_error_estimate16}
\aligned
\sum_{n=0}^{k}\Delta t&(f^n,d_tg^{n+1})
=-\sum_{n=1}^{k}\Delta t(d_tf^n,g^n)\\
&+(f^k,g^{k+1})+(f^0,g^0).
\endaligned
\end{equation}
we find 
\begin{equation}\label{e_H1_error_estimate17}
\aligned
&\sum_{n=0}^{k}\Delta t(T_3^{n+1/2},d_te_{\phi}^{n+1})\\
=&-\sum_{n=1}^{k}\Delta t(d_tT_3^{n+1/2},e_{\phi}^n)
+(T_3^{k+1/2},e_{\phi}^{k+1})+(T_3^{1/2},e_{\phi}^0)\\
\leq& C\sum_{n=1}^{k}\Delta t\|e_{\phi}^n\|_m^2+\frac{\lambda}{4}\|e_{\phi}^{k+1}\|_m^2+C\|\phi\|^2_{W^{1,\infty}(J;W^{4,\infty}(\Omega))}(h_x^4+h_y^4).
\endaligned
\end{equation}
Similarly we have
\begin{equation}\label{e_H1_error_estimate18}
\aligned
&-\sum_{n=0}^{k}\Delta t(\delta_x(\phi)^{n+1/2},d_td_xe_{\phi}^{n+1/2})_x
-\sum_{n=0}^{k}\Delta t(\delta_y(\phi)^{n+1/2},d_td_ye_{\phi}^{n+1/2})_y\\
\leq&C\sum_{n=1}^{k}\Delta t\|\textbf{d}e_{\phi}^n\|_{TM}^2+\frac{\lambda}{4}\|e_{\phi}^{k+1}\|_m^2+C\|\phi\|^2_{W^{1,\infty}(J;W^{3,\infty}(\Omega))}(h_x^4+h_y^4).
\endaligned
\end{equation}
Multiplying equation (\ref{e_H1_error_estimate12}) by $\Delta t$, summing over
$n,~n=0,1,\ldots,k$ and combining with equations (\ref{e_H1_error_estimate17}) and 
(\ref{e_H1_error_estimate18}), we can obtain (\ref{e_H1_error_estimate19}). 
\end{proof}
\medskip

\begin{lemma}\label{Proposition2}
Under the condition of Theorem \ref{thm: H1_error_estimates}, there exists a positive constant $C$ independent of $h_x$, $h_y$ and $\Delta t$ such that
\begin{equation}\label{e_H1_error_estimate25}
\aligned
&\|e_{\phi}^{k+1}\|_m^2+M\sum\limits_{n=0}^{k}\Delta t\|e_{\mu}^{n+1/2}\|_m^2\\
\leq& C\sum\limits_{n=0}^{k}\Delta t(e_r^{n+1})^2+C\sum\limits_{n=0}^{k}\Delta t\|e_{\phi}^n\|_m^2\\
&+\frac{M}{4}\sum\limits_{n=0}^{k}\Delta t\|\textbf{d}e_{\mu}^{n+1/2}\|_{TM}^2
+C\sum\limits_{n=0}^{k}\Delta t\|\textbf{d}e_{\phi}^{n+1/2}\|_{TM}^2\\
&+C(\|\mu\|_{L^{\infty}(J;W^{4,\infty}(\Omega))}^2+\|\phi\|_{L^{\infty}(J;W^{4,\infty}(\Omega))}^2)(h_x^4+h_y^4)\\
&+C\|\phi\|_{W^{3,\infty}(J;L^{\infty}(\Omega))}^2\Delta t^4.
\endaligned
\end{equation} 
\end{lemma}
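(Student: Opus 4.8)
\textbf{Proof plan for Lemma \ref{Proposition2}.}
The plan is to derive a second, independent energy-type estimate by testing the error equations with a different multiplier than in Lemma \ref{Proposition1}: there we paired \eqref{e_H1_error_estimate2} with $e_\mu^{n+1/2}$ and \eqref{e_H1_error_estimate3} with $d_t e_\phi^{n+1}$, which produced control of $\|\mathbf{d}e_\phi^{k+1}\|_{TM}$ and the dissipation $\|\mathbf{d}e_\mu^{n+1/2}\|_{TM}$; here we want instead to control $\|e_\phi^{k+1}\|_m$ and the stronger dissipation $\|e_\mu^{n+1/2}\|_m$. First I would test the error equation \eqref{e_H1_error_estimate2} for $d_t e_\phi^{n+1}$ with $e_\phi^{n+1/2}$ over the grid: the left side gives $(d_t e_\phi^{n+1}, e_\phi^{n+1/2})_m = \tfrac{1}{2\Delta t}(\|e_\phi^{n+1}\|_m^2 - \|e_\phi^{n}\|_m^2)$, while on the right the term $M(D_x d_x e_\mu + D_y d_y e_\mu)^{n+1/2}$ paired with $e_\phi^{n+1/2}$ is handled by Lemma \ref{le1} to move a difference operator onto $e_\phi^{n+1/2}$. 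The difficulty is that this introduces $(\mathbf{d}e_\mu^{n+1/2}, \mathbf{d}e_\phi^{n+1/2})_{TM}$, which is only controllable after multiplying by $\Delta t$ and absorbing part of it into the $\tfrac{M}{4}\sum \Delta t\|\mathbf{d}e_\mu^{n+1/2}\|_{TM}^2$ slack on the right side of \eqref{e_H1_error_estimate25}, with the remainder going into $C\sum\Delta t\|\mathbf{d}e_\phi^{n+1/2}\|_{TM}^2$. The truncation contributions $T_1, T_2$ (paired with $e_\phi^{n+1/2}$, not $e_\mu$) and the consistency terms $D_x \partial_x\mu - \Delta\mu$, $\delta_x(\mu)$ are estimated exactly as in Lemma \ref{Proposition1}, giving the $(h_x^4+h_y^4)$ and $\Delta t^4$ terms together with $\sum\Delta t\|e_\phi^n\|_m^2$.

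The second ingredient is to also use the definition \eqref{e_H1_error_estimate3} of $e_\mu^{n+1/2}$ directly to extract control of $\|e_\mu^{n+1/2}\|_m$. The natural move is to test \eqref{e_H1_error_estimate3} against $e_\mu^{n+1/2}$ itself: the left side is $\|e_\mu^{n+1/2}\|_m^2$; the term $-(D_x d_x e_\phi + D_y d_y e_\phi)^{n+1/2}$ paired with $e_\mu^{n+1/2}$ becomes, via Lemma \ref{le1}, $(\mathbf{d}e_\phi^{n+1/2}, \mathbf{d}e_\mu^{n+1/2})_{TM}$, again split by Young's inequality between $\tfrac{M}{4}\sum\Delta t\|\mathbf{d}e_\mu^{n+1/2}\|_{TM}^2$ and $C\sum\Delta t\|\mathbf{d}e_\phi^{n+1/2}\|_{TM}^2$ after the $\Delta t$-summation; the $\lambda e_\phi^{n+1/2}$ term gives $\|e_\phi^{n+1/2}\|_m^2$; the nonlinear difference $\tfrac{R^{n+1/2}}{\sqrt{E_1^h(\tilde Z)}}F'(\tilde Z) - \tfrac{r^{n+1/2}}{\sqrt{E_1(\phi)}}F'(\phi)$ is decomposed exactly as in \eqref{e_H1_error_estimate10} into an $e_r$-part and a $\tilde Z - \tilde\phi$ part plus consistency, and under the bootstrap hypothesis \eqref{e_H1_error_estimate_boundedness_added1} each piece is bounded by $C(e_r^{n+1/2})^2 + C(\|e_\phi^n\|_m^2+\|e_\phi^{n-1}\|_m^2) + C(h_x^4+h_y^4) + C\Delta t^4$ using Cauchy--Schwarz and $\|\tilde Z - \tilde\phi\|_m \le C(\|e_\phi^n\|_m + \|e_\phi^{n-1}\|_m)$; the consistency term $T_3$ contributes $(h_x^4+h_y^4)$.

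I would then multiply both resulting identities by $\Delta t$, sum over $n = 0, \dots, k$, add them with an appropriate weight so that the cross terms in $\|\mathbf{d}e_\mu\|_{TM}$ combine to leave at least $M\sum\Delta t\|e_\mu^{n+1/2}\|_m^2$ on the left while the $\|\mathbf{d}e_\mu\|_{TM}$ pieces on the right total no more than $\tfrac{M}{4}\sum\Delta t\|\mathbf{d}e_\mu^{n+1/2}\|_{TM}^2$, and telescope the $\|e_\phi^{n+1}\|_m^2 - \|e_\phi^n\|_m^2$ difference down to $\|e_\phi^{k+1}\|_m^2$ (using $e_\phi^0 = 0$). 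The main obstacle I anticipate is bookkeeping the several appearances of $\|\mathbf{d}e_\mu^{n+1/2}\|_{TM}$ and $\|\mathbf{d}e_\phi^{n+1/2}\|_{TM}$: one must be careful that the total coefficient of $\sum\Delta t\|\mathbf{d}e_\mu^{n+1/2}\|_{TM}^2$ landing on the right-hand side stays at exactly $M/4$ as claimed (so that, when Lemma \ref{Proposition1} and Lemma \ref{Proposition2} are later combined, the dissipation terms can be absorbed), which forces the Young's-inequality constants to be chosen in a coordinated way across the two tested identities; everything else is routine application of Lemma \ref{le1}, Cauchy--Schwarz, the truncation bounds \eqref{e_error_estimate_E1}--\eqref{e_error_estimate_E4}, \eqref{e_error_estimate_E3}, and the bootstrap hypothesis \eqref{e_H1_error_estimate_boundedness_added1}.
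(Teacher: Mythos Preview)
Your plan is correct and essentially matches the paper's, just organized differently. The paper also tests \eqref{e_H1_error_estimate2} with $e_\phi^{n+1/2}$; but instead of separately testing \eqref{e_H1_error_estimate3} with $e_\mu^{n+1/2}$ as you propose, it performs an algebraic ``flip'': after the first summation by parts yields $-M(d_xe_\mu^{n+1/2},d_xe_\phi^{n+1/2})_x$, it rewrites this as $-M(d_xe_\mu^{n+1/2},(d_xe_\phi+\delta_x(\phi))^{n+1/2})_x$ plus a correction, integrates by parts \emph{back} to $M(e_\mu^{n+1/2},D_x(d_xe_\phi+\delta_x(\phi))^{n+1/2})_m$, and then substitutes for $D_x(d_xe_\phi+\delta_x(\phi))+D_y(d_ye_\phi+\delta_y(\phi))$ directly from \eqref{e_H1_error_estimate3} to produce $-M\|e_\mu^{n+1/2}\|_m^2$ plus lower-order terms. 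This double-IBP-and-substitute maneuver is algebraically identical to your second testing of \eqref{e_H1_error_estimate3} with weight $M$ against $e_\mu^{n+1/2}$.

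One simplification worth pointing out for your version: if you weight the second tested identity by exactly $M$ before adding, the two principal cross terms $-M(\mathbf{d}e_\mu^{n+1/2},\mathbf{d}e_\phi^{n+1/2})_{TM}$ (from the first test) and $+M(\mathbf{d}e_\phi^{n+1/2},\mathbf{d}e_\mu^{n+1/2})_{TM}$ (from the second) cancel outright, so you never need Young's inequality on them at all. The only surviving contribution to $\|\mathbf{d}e_\mu^{n+1/2}\|_{TM}^2$ on the right then comes from the single consistency pairing $M(\delta(\phi)^{n+1/2},\mathbf{d}e_\mu^{n+1/2})_{TM}$, which makes the $M/4$ coefficient bookkeeping you were worried about trivial. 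This cancellation is precisely what the paper's flip trick achieves implicitly.
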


\begin{proof} 
Multiplying equation (\ref{e_H1_error_estimate2}) by $e_{\phi,i,j}^{n+1/2}h_xh_y$, and making summation on $i,j$ for $1\leq i\leq N_x,~1\leq j\leq N_y$, we have 
\begin{equation}\label{e_H1_error_estimate20}
\aligned
&(d_te_{\phi}^{n+1},e_{\phi}^{n+1/2})_m\\
=&M\left(D_x(d_xe_{\mu}+\delta_x(\mu))^{n+1/2}+D_y(d_ye_{\mu}+\delta_y(\mu))^{n+1/2},e_{\phi}^{n+1/2}\right)_m\\
&+(T_1^{n+1/2}, e_{\phi}^{n+1/2})_m+(T_2^{n+1/2}, e_{\phi}^{n+1/2})_m.
\endaligned
\end{equation} 
Using Lemma \ref{le1}, the first term on the right hand side of equation (\ref{e_H1_error_estimate20}) can be transformed into the following:
\begin{equation}\label{e_H1_error_estimate21}
\aligned
&M\left(D_x(d_xe_{\mu}+\delta_x(\mu))^{n+1/2}+D_y(d_ye_{\mu}+\delta_y(\mu))^{n+1/2},e_{\phi}^{n+1/2}\right)_m\\
=&-M\left((d_xe_{\mu}+\delta_x(\mu))^{n+1/2}, d_xe_{\phi}^{n+1/2}\right)_x\\
&-M\left((d_ye_{\mu}+\delta_y(\mu))^{n+1/2}, d_ye_{\phi}^{n+1/2}\right)_y.
\endaligned
\end{equation} 
The first term on the right hand side of equation (\ref{e_H1_error_estimate21}) can be estimated as:
\begin{equation}\label{e_H1_error_estimate22}
\aligned
&-M\left((d_xe_{\mu}+\delta_x(\mu))^{n+1/2}, d_xe_{\phi}^{n+1/2}\right)_x\\
=&-M\left(d_xe_{\mu}^{n+1/2}, (d_xe_{\phi}+\delta_x(\phi))^{n+1/2}\right)_x\\
&+M(d_xe_{\mu}^{n+1/2}, \delta_x(\phi)^{n+1/2})_x
-M(\delta_x(\mu)^{n+1/2}, d_xe_{\phi}^{n+1/2})_x\\
\leq&M\left(e_{\mu}^{n+1/2},D_x(d_xe_{\phi}+\delta_x(\phi))^{n+1/2} \right)_m\\
&+\frac{M}{4}\|d_xe_{\mu}^{n+1/2}\|_x^2+C\|d_xe_{\phi}^{n+1/2}\|_x^2\\
&+C(\|\mu\|^2_{L^{\infty}(J;W^{3,\infty}(\Omega))}+\|\phi\|^2_{L^{\infty}(J;W^{3,\infty}(\Omega))})(h_x^4+h_y^4).
\endaligned
\end{equation} 
In the $y$ direction, we have the similar estimates. Then the left hand side in (\ref{e_H1_error_estimate21}) can be bounded by:
\begin{equation}\label{e_H1_error_estimate23}
\aligned
&M\left(D_x(d_xe_{\mu}+\delta_x(\mu))^{n+1/2}+D_y(d_ye_{\mu}+\delta_y(\mu))^{n+1/2},e_{\phi}^{n+1/2}\right)_m\\
\leq &M\left(e_{\mu}^{n+1/2},D_x(d_xe_{\phi}+\delta_x(\phi))^{n+1/2}+D_y(d_ye_{\phi}+\delta_y(\phi))^{n+1/2} \right)_m\\
&+\frac{M}{4}\|\textbf{d}e_{\mu}^{n+1/2}\|_{TM}^2+C\|\textbf{d}e_{\phi}^{n+1/2}\|_{TM}^2\\
&+C(\|\mu\|_{L^{\infty}(J;W^{3,\infty}(\Omega))}^2+\|\phi\|_{L^{\infty}(J;W^{3,\infty}(\Omega))}^2)(h_x^4+h_y^4).
\endaligned
\end{equation} 
Thanks to (\ref{e_H1_error_estimate3}) and (\ref{e_H1_error_estimate10}), the first term on the right hand side of  (\ref{e_H1_error_estimate23}) can be estimated as follows:
\begin{equation}\label{e_H1_error_estimate24}
\aligned
&M\left(e_{\mu}^{n+1/2},D_x(d_xe_{\phi}+\delta_x(\phi))^{n+1/2}+D_y(d_ye_{\phi}+\delta_y(\phi))^{n+1/2} \right)_m\\
=&M\left(e_{\mu}^{n+1/2},\frac{R^{n+1/2}}{\sqrt{E_1^h(\tilde{Z}^{n+1/2})}}F^{\prime}(\tilde{Z}^{n+1/2})-\frac{r^{n+1/2}}{\sqrt{E_1(\phi^{n+1/2})}}F^{\prime}(\phi^{n+1/2}) \right)_m\\
&+M(e_{\mu}^{n+1/2},\lambda e_{\phi}^{n+1/2})_m+M(e_{\mu}^{n+1/2},T_3^{n+1/2})_m
-M\|e_{\mu}^{n+1/2}\|_m^2\\
\leq& \frac{M}{2}\|e_{\mu}^{n+1/2}\|_m^2+C(e_r^{n+1}+e_r^{n})^2+
C(\|e_{\phi}^n\|_m^2+\|e_{\phi}^{n-1}\|_m^2)\\
&-M\|e_{\mu}^{n+1/2}\|_m^2+C\|\phi\|_{L^{\infty}(J;W^{4,\infty}(\Omega))}^2(h_x^4+h_y^4).
\endaligned
\end{equation}
Combining equation (\ref{e_H1_error_estimate20}) with equations (\ref{e_H1_error_estimate23}) and (\ref{e_H1_error_estimate24}) and multiplying equation (\ref{e_H1_error_estimate12}) by $2\Delta t$, summing over
$n,~n=0,1,\ldots,k$ lead to (\ref{e_H1_error_estimate25}).
\end{proof}

\begin{lemma}\label{hyperthesis}
 Under the condition of Theorem \ref{thm: H1_error_estimates}, there exists a positive constant $C_*$ independent of $h_x$, $h_y$ and $\Delta t$ such that
 $$\|Z^n\|_{\infty}\le C_* \;\text{ for  all }\; n.$$
\end{lemma}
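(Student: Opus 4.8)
The plan is to prove the $L^\infty$ bound on $Z^n$ by a bootstrap (induction) argument that is interlocked with the error estimates of Lemmas \ref{Proposition1} and \ref{Proposition2}. First I would set up the induction: assume that the hypothesis \eqref{e_H1_error_estimate_boundedness_added1}, $\|Z^m\|_\infty \le C_*$, holds for all $m \le n$ with a constant $C_*$ to be determined (it should be chosen, say, as $C_* = 1 + \|\phi\|_{L^\infty(J;L^\infty(\Omega))}$, or more safely $C_* = \|\phi\|_{L^\infty(J;W^{1,\infty}(\Omega))} + 1$). Under this assumption, the quantities $E_1^h(\tilde Z^{m+1/2})$ stay bounded below by a positive constant and $F'(\tilde Z^{m+1/2})$, $F''$, $F'''$ evaluated along $\tilde Z$ stay bounded, so the constants $C$ appearing in Lemmas \ref{Proposition1} and \ref{Proposition2} are legitimately independent of $h_x,h_y,\Delta t$ up to time level $n$.

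Next I would combine the two lemmas. Adding \eqref{e_H1_error_estimate19} to a suitably small multiple of \eqref{e_H1_error_estimate25} (so that the $\frac{M}{4}\sum \Delta t\|\textbf{d}e_\mu^{n+1/2}\|_{TM}^2$ term on the right of \eqref{e_H1_error_estimate25} and the $\frac{M}{2}\sum\Delta t\|e_\mu^{n+1/2}\|_m^2$ term on the right of \eqref{e_H1_error_estimate19} are absorbed into the corresponding left-hand-side dissipation terms) yields an inequality of the form
\begin{equation*}
\aligned
&(e_r^{k+1})^2+\|\textbf{d}e_{\phi}^{k+1}\|^2_{TM}+\|e_{\phi}^{k+1}\|_m^2+\sum_{n=0}^{k}\Delta t\|\textbf{d}e_{\mu}^{n+1/2}\|^2_{TM}+\sum_{n=0}^{k}\Delta t\|e_{\mu}^{n+1/2}\|^2_{m}\\
&\leq C\sum_{n=0}^{k+1}\Delta t\Big((e_r^n)^2+\|\textbf{d}e_{\phi}^n\|_{TM}^2+\|e_{\phi}^n\|_m^2\Big)+C(h_x^4+h_y^4)+C\Delta t^4,
\endaligned
\end{equation*}
where $C$ depends on $C_*$ and the stated norms of $\phi,\mu$ but not on the discretization parameters, provided $k\le n$. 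Applying the discrete Gronwall inequality (valid since $\Delta t$ is bounded; the $\Delta t\,(e_r^{k+1})^2$ etc.\ term on the right can be absorbed for $\Delta t$ small) gives
$$
\|e_{\phi}^{k+1}\|_m^2+\|\textbf{d}e_{\phi}^{k+1}\|_{TM}^2+(e_r^{k+1})^2 \le C_1(h_x^4+h_y^4+\Delta t^4),\qquad k\le n,
$$
with $C_1 = C_1(C_*)$ independent of $h_x,h_y,\Delta t$.

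Then comes the bootstrap closure, which is the main obstacle. I would use a discrete Sobolev / inverse inequality in two dimensions to convert the $H^1$-type error control into an $L^\infty$ bound: on the staggered grid one has an estimate of the schematic form $\|e_{\phi}^{n+1}\|_\infty \le C\big(\|e_{\phi}^{n+1}\|_m + \|\textbf{d}e_{\phi}^{n+1}\|_{TM}\big)^{1/2}\big(\cdots\big)^{1/2}$ or, more crudely, a logarithmic/inverse estimate $\|e_\phi^{n+1}\|_\infty \le C|\ln h|^{1/2}(\|e_\phi^{n+1}\|_m+\|\textbf{d}e_\phi^{n+1}\|_{TM})$, hence
$$
\|Z^{n+1}\|_\infty \le \|\phi^{n+1}\|_\infty + \|e_{\phi}^{n+1}\|_\infty \le \|\phi\|_{L^\infty(J;L^\infty)} + C C_1^{1/2}|\ln h|^{1/2}(h_x^2+h_y^2+\Delta t^2).
$$
Using the hypothesis $\Delta t \le C(h_x+h_y)$, the right-hand side is $\|\phi\|_{L^\infty(J;L^\infty)} + o(1)$ as $h_x,h_y\to 0$, so there is $h_0>0$ such that for all $h_x,h_y\le h_0$ the quantity $\|Z^{n+1}\|_\infty \le \|\phi\|_{L^\infty(J;L^\infty)}+1 =: C_*$, which is strictly the induction constant we assumed — the induction closes. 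The delicate point is that $C_1$ must be shown to depend only on $C_*$ (the assumed bound at earlier steps) and not circularly on the bound at step $n+1$; this is fine because all the nonlinear terms estimated in Lemmas \ref{Proposition1}--\ref{Proposition2} involve $F$ and $E_1^h$ evaluated only at $\tilde Z^{m+1/2}$ for $m\le n$, i.e.\ at time levels already covered by the induction hypothesis. The base case $n=0,1$ follows from the choice $Z^0=\phi_0$ (so $e_\phi^0=0$) and a consistent second-order startup for $Z^1$, together with the smoothness of $\phi$. Finally, once $C_*$ is fixed, the estimates of the two lemmas hold for all $k$, and the bound $\|Z^n\|_\infty \le C_*$ is established for all $n$.
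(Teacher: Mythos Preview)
Your proposal is correct and follows essentially the same bootstrap/induction structure as the paper: assume $\|Z^m\|_\infty\le C_*$ for $m\le n$, combine Lemmas~\ref{Proposition1} and~\ref{Proposition2}, apply the discrete Gronwall inequality to obtain the $O(h^2+\Delta t^2)$ error bound up to step $n+1$, and then close the induction by recovering an $L^\infty$ bound on $Z^{n+1}$.

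The one technical difference is in the closing step. The paper uses the crude inverse inequality $\|v\|_\infty\le Ch^{-1}\|v\|_m$ applied to $Z^{n+1}-\Pi_h\phi^{n+1}$, which yields $\|Z^{n+1}\|_\infty\le C_1(h+h^{-1}\Delta t^2)+\|\phi^{n+1}\|_\infty$; this is precisely where the hypothesis $\Delta t\le C(h_x+h_y)$ is used, to kill the $h^{-1}\Delta t^2$ term. Your alternative via a 2D discrete log-Sobolev inequality $\|v\|_\infty\le C|\ln h|^{1/2}(\|v\|_m+\|\textbf{d}v\|_{TM})$ is also valid and in fact does not require the $\Delta t\lesssim h$ coupling at all for this step (you invoke it unnecessarily). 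Either route closes the induction; the paper's choice is simpler but explains the origin of the CFL-type restriction in the theorem.
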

\begin{proof} We proceed in two steps.

\textit{\textbf{Step 1}} (Definition of $C_*$):  
Using the scheme (\ref{e_H1_full-discret1})-(\ref{e_H1_full-discret3}) for $n=0$ and applying the inverse assumption, we can get the approximation $Z^1$ with the following property:
\begin{equation*}
\aligned
\|Z^1\|_{\infty}\leq&\|Z^1-\phi^1\|_{\infty}+\|\phi^1\|_{\infty}
\leq \|Z^1-\Pi_h\phi^1\|_{\infty}+\|\Pi_h\phi^1-\phi^1\|_{\infty}+\|\phi^1\|_{\infty}\\
\leq&Ch^{-1}(\|Z^1-\phi^1 \|_m+\|\phi^1-\Pi_h\phi^1\|_m)+\|\Pi_h\phi^1-\phi^1\|_{\infty}+\|\phi^1\|_{\infty}\\
\leq&C(h+h^{-1}\Delta t^2)+\|\phi^1\|_{\infty}\leq C.
\endaligned
\end{equation*}
where $h=\max\{h_x,h_y\}$ and $\Pi_h$ is an bilinear interpolant operator with the following estimate \cite{dawson1998two}:
\begin{equation}\label{e_H1_error_estimate_boundedness_added2}
\aligned
\|\Pi_h\phi^1-\phi^1\|_{\infty}\leq Ch^2.
\endaligned
\end{equation}
 Thus we can choose the positive constant $C_*$ independent of $h$ and $\Delta t$ such that
\begin{align*}
C_*&\geq \max\{\|Z^{1}\|_{\infty}, 2\|\phi^{n}\|_{\infty}\}.
\end{align*}

\textit{\textbf{Step 2}} (Induction): By the definition of $C_*$, it is trivial that hypothesis (\ref{e_H1_error_estimate_boundedness_added1}) holds true for $l=1$. Supposing that $\|Z^{l-1}\|_{\infty}\leq C_*$ holds true for an integer $l=1,\cdots,k+1$, with the aid of the estimate (\ref{e_H1_error_estimate27}), we have that
$$\|Z^{l}-\phi^l\|_m\leq C(\Delta t^2+h^2).$$
Next we prove that $\|Z^{l}\|_{\infty}\leq C_*$ holds true.
Since
\begin{equation}\label{e_H1_error_estimate_boundedness_added3}
\aligned
\|Z^l\|_{\infty}\leq&\|Z^l-\phi^l\|_{\infty}+\|\phi^l\|_{\infty}
\leq \|Z^l-\Pi_h\phi^l\|_{\infty}+\|\Pi_h\phi^l-\phi^l\|_{\infty}+\|\phi^l\|_{\infty}\\
\leq&Ch^{-1}(\|Z^l-\phi^l \|_m+\|\phi^l-\Pi_h\phi^l\|_m)+\|\Pi_h\phi^l-\phi^l\|_{\infty}+\|\phi^l\|_{\infty}\\
\leq&C_1(h+h^{-1}\Delta t^2)+\|\phi^1\|_{\infty}.
\endaligned
\end{equation}
Let $\Delta t\leq C_2h$ and a positive constant $h_1$ be small enough to satisfy
$$C_1(1+C_2^2)h_1\leq\frac{C_*}{2}.$$
Then for $h\in (0,h_1],$ we derive from (\ref{e_H1_error_estimate_boundedness_added3}) that
\begin{equation*}
\aligned
\|Z^l\|_{\infty}
\leq&C_1(h+h^{-1}\Delta t^2)+\|\phi^l\|_{\infty}\\
\leq &C_1(h_1+C_2^2h_1)+\frac{C_*}{2}
\leq C_*.
\endaligned
\end{equation*}
This completes the induction.      
\end{proof}

We are now in position  to prove our main results.

\begin{proof}[Proof of Theorem 4]

Thanks to the above three lemmas, we can obtain 
\begin{equation}\label{e_H1_error_estimate26}
\aligned
&(e_r^{k+1})^2+\frac{1}{2}\|\textbf{d}e_{\phi}^{k+1}\|^2_{TM}+ \|e_{\phi}^{k+1}\|_m^2\\
&+\frac{M}{4}\sum_{n=0}^{k}\Delta t\|\textbf{d}e_{\mu}^{n+1/2}\|^2_{TM}+
\frac{M}{2}\sum\limits_{n=0}^{k}\Delta t\|e_{\mu}^{n+1/2}\|_m^2
\\
\leq&C\sum_{n=0}^{k+1}\Delta t\|\textbf{d}e_{\phi}^n\|_{TM}^2+C\sum_{n=0}^{k+1}\Delta t\|e_{\phi}^n\|_m^2+C\sum_{n=0}^{k+1}\Delta t(e_r^{n})^2\\
&+C(\|\phi\|^2_{W^{1,\infty}(J;W^{4,\infty}(\Omega))}+\|\mu\|_{L^{\infty}(J;W^{4,\infty}(\Omega))}^2 )(h_x^4+h_y^4)\\
&+C\|\phi\|_{W^{3,\infty}(J;W^{1,\infty}(\Omega))}^2\Delta t^4.
\endaligned
\end{equation}
Finally applying the discrete Gronwall's inequality, we arrive at the desired result:
\begin{equation}\label{e_H1_error_estimate27}
\aligned
&(e_r^{k+1})^2+\|\textbf{d}e_{\phi}^{k+1}\|^2_{TM}+ \|e_{\phi}^{k+1}\|_m^2\\
&+\sum_{n=0}^{k}\Delta t\|\textbf{d}e_{\mu}^{n+1/2}\|^2_{TM}+
\sum\limits_{n=0}^{k}\Delta t\|e_{\mu}^{n+1/2}\|_m^2
\\
\leq&C(\|\phi\|^2_{W^{1,\infty}(J;W^{4,\infty}(\Omega))}+\|\mu\|_{L^{\infty}(J;W^{4,\infty}(\Omega))}^2 )(h_x^4+h_y^4)\\
&+C\|\phi\|_{W^{3,\infty}(J;W^{1,\infty}(\Omega))}^2\Delta t^4.
\endaligned
\end{equation}   
Thus, the proof of Theorem 4 is complete.

\end{proof}

 \subsection{$L^2$ gradient flow}
 For the $L^2$ gradient flow, we shall only state the error estimates below, as their proofs are
essentially the same as for the $H^{-1}$ gradient flow.

\begin{theorem}\label{thm: L2_error_estimates}
We  assume  that $F(\phi)\in C^3(\mathbb{R})$ and $\phi\in W^{1,\infty}(J;W^{4,\infty}(\Omega)) \cap W^{3,\infty}(J;W^{1,\infty}(\Omega))$ and $\Delta t\leq C(h_x+h_y)$. Then for the discrete scheme 
(\ref{e_L2_full-discret1})-(\ref{e_L2_full-discret3}), there exists a positive constant $C$ independent of $h_x$, $h_y$ and $\Delta t$ such that
\begin{equation}\label{e_L2_error_estimate1}
\aligned
&\|Z^{k+1}-\phi^{k+1}\|_m+\|\textbf{d}Z^{k+1}-\textbf{d}\phi^{k+1}\|_{TM}+
|R^{k+1}-r^{k+1}|\\
\leq&C\|\phi\|_{W^{3,\infty}(J;W^{1,\infty}(\Omega))}\Delta t^2+C\|\phi\|_{W^{1,\infty}(J;W^{4,\infty}(\Omega))}(h_x^2+h_y^2).
\endaligned
\end{equation}
\end{theorem}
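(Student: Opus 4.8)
The strategy is to mirror exactly the three-lemma argument used for the $H^{-1}$ case, replacing the role of the chemical-potential mobility operator $\mathcal{G}=\Delta$ by $\mathcal{G}=-1$. I would first subtract the exact equations \eqref{e_true solution1}--\eqref{e_true solution3} from the scheme \eqref{e_L2_full-discret1}--\eqref{e_L2_full-discret3} to obtain error equations for $e_\phi,e_\mu,e_r$; the only structural change from \eqref{e_H1_error_estimate2} is that the right-hand side of the $e_\phi$-equation is now $-Me_\mu^{n+1/2}$ plus a single truncation term $T_1^{n+1/2}=O(\Delta t^2)$ coming from the Crank--Nicolson time-centering (there is no spatial truncation error $T_2$ here, since there is no discrete Laplacian acting on $\mu$). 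The equations \eqref{e_H1_error_estimate3}--\eqref{e_H1_error_estimate4} for $e_\mu$ and $e_r$ are unchanged, as is the treatment of the nonlinear SAV terms via \eqref{e_H1_error_estimate10}--\eqref{e_H1_error_estimate15}, which only uses $F\in C^3$, the $L^\infty$ bound on $Z^n$, and boundedness of $r$.

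\textbf{Key steps.} First I would establish the analogue of Lemma \ref{Proposition1}: test the $e_\phi$-equation with $e_\mu^{n+1/2}h_xh_y$ and sum, test the $e_\mu$-equation with $d_te_\phi^{n+1}h_xh_y$ and sum (using Lemma \ref{le1} for the $\|\textbf{d}e_\phi\|_{TM}^2$ telescoping and the $\lambda\|e_\phi\|_m^2$ telescoping exactly as in \eqref{e_H1_error_estimate8}--\eqref{e_H1_error_estimate9}), multiply the $e_r$-equation by $e_r^{n+1}+e_r^n$, and add. In the $L^2$ case testing \eqref{e_L2_full-discret1} by $e_\mu^{n+1/2}$ gives $(d_te_\phi^{n+1},e_\mu^{n+1/2})_m=-M\|e_\mu^{n+1/2}\|_m^2+(T_1^{n+1/2},e_\mu^{n+1/2})_m$, so the good dissipative term is now $M\|e_\mu^{n+1/2}\|_m^2$ rather than $M\|\textbf{d}e_\mu^{n+1/2}\|_{TM}^2$; correspondingly the bad terms arising from the SAV nonlinearity and from $T_1$ must be absorbed into $\tfrac{M}{2}\|e_\mu^{n+1/2}\|_m^2$ via Cauchy--Schwarz (and $T_1$ needs the summation-by-parts in time \eqref{e_H1_error_estimate16} to trade $d_te_\phi$ for $e_\phi$). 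One then obtains a discrete inequality of the same shape as \eqref{e_H1_error_estimate19} but with $\|\textbf{d}e_\mu^{n+1/2}\|_{TM}$ replaced everywhere by $\|e_\mu^{n+1/2}\|_m$ and without the $\|\mu\|_{W^{4,\infty}}$ terms. Since the good term $M\|e_\mu\|_m^2$ now appears already in this first estimate, the auxiliary Lemma \ref{Proposition2}, whose purpose in the $H^{-1}$ case was to recover control of $\|e_\mu\|_m$, becomes unnecessary — a mild simplification. Finally I would carry over Lemma \ref{hyperthesis} verbatim: the bootstrap for $\|Z^n\|_\infty\le C_*$ uses only the inverse inequality, the interpolation estimate \eqref{e_H1_error_estimate_boundedness_added2}, the constraint $\Delta t\le C h$, and the preliminary bound $\|Z^l-\phi^l\|_m\le C(\Delta t^2+h^2)$, all of which hold identically. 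Combining the error inequality with the $L^\infty$ bound, applying the discrete Gronwall inequality and taking square roots yields \eqref{e_L2_error_estimate1}.

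\textbf{Main obstacle.} The delicate point, just as in the $H^{-1}$ case, is that the hypothesis $\|Z^n\|_\infty\le C_*$ is used inside the estimates of Lemma \ref{Proposition1} (to bound $F'(\tilde Z)$, $E_1^h(\tilde Z)^{-1/2}$, and the differences in \eqref{e_H1_error_estimate10_added3}), while that hypothesis is itself proved in Lemma \ref{hyperthesis} using the conclusion of Lemma \ref{Proposition1}; one must be careful that this is a genuine induction on the time level $n$ and not circular — at step $l$ one assumes $\|Z^m\|_\infty\le C_*$ for $m\le l-1$, which is exactly what licenses the per-step bound needed to push the energy estimate up to level $l$ and hence to bound $\|Z^l-\phi^l\|_m$ and then $\|Z^l\|_\infty$. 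The constraint $\Delta t\le C(h_x+h_y)$ enters precisely here, to kill the $h^{-1}\Delta t^2$ term from the inverse inequality. Everything else is routine bookkeeping with Cauchy--Schwarz, the discrete summation-by-parts Lemma \ref{le1}, the time-summation-by-parts \eqref{e_H1_error_estimate16}, and discrete Gronwall.
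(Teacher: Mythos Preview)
Your proposal is correct and matches the paper's own treatment, which simply states that the proof is ``essentially the same as for the $H^{-1}$ gradient flow'' without supplying details. Your observation that the good term $M\|e_\mu^{n+1/2}\|_m^2$ now appears directly from testing \eqref{e_L2_full-discret1} by $e_\mu^{n+1/2}$ (so that the analogue of Lemma~\ref{Proposition2} is unnecessary) and that the spatial truncation $T_2$ vanishes is accurate and in fact sharpens the paper's one-line remark.
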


\medskip
\section{Numerical simulations}
We present in this section various numerical experiments to verify the energy stability and accuracy of the proposed numerical schemes.

\subsection{Accuracy test for Allen-Cahn and Cahn-Hilliard equations}
We consider the free energy
\begin{equation}\label{e_numerical_1}
\aligned
E(\phi)=\int_{\Omega}\left(\frac{1}{2}|\nabla \phi |^2+\frac{1}{4\epsilon^2}(\phi^2-1)^2 \right)d\textbf{x}.
\endaligned
\end{equation}
and for better accuracy, rewrite  it as
\begin{equation}\label{e_numerical_1_transformed}
\aligned
E(\phi)=\int_{\Omega}\left(\frac{1}{2}|\nabla \phi |^2+\frac{\beta}{2\epsilon^2}\phi^2
+\frac{1}{4\epsilon^2}(\phi^2-1-\beta)^2 -\frac{\beta^2+2\beta}{4\epsilon^2}\right)d\textbf{x},
\endaligned
\end{equation}
where $\beta$ is a positive number to be chosen.
To apply our schemes (\ref{e_H1_full-discret1})-(\ref{e_H1_full-discret3}) or 
(\ref{e_L2_full-discret1})-(\ref{e_L2_full-discret3}) to the system (\ref{e_model}), we 
drop the constant in the free energy and specify the operator $\mathcal{G}$, the energy $E_1(\phi)$ and $\lambda$ as follows:
\begin{equation}\label{e_numerical_2}
\aligned
\mathcal{G}=-(-\Delta)^s,~~E_1(\phi)=\frac{1}{4\epsilon^2}\int_{\Omega}(\phi^2-1-\beta)^2d\textbf{x},~~\lambda=\frac{\beta}{\epsilon^2}.
\endaligned
\end{equation}
 The system (\ref{e_model}) becomes the standard Allen-Cahn equation with $s=0$, and the standard Cahn-Hilliard equation with $s=1$.

We denote
\begin{flalign*}
\renewcommand{\arraystretch}{1.5}
  \left\{
   \begin{array}{l}
\|f-g\|_{\infty,2}=\max\limits_{0\leq n\leq k}\left\{\|f^{n+q}-g^{n+q}\|_X\right\},\\
\|f-g\|_{2,2}=\left(\sum\limits_{n=0}^{k}\Delta t\left\|f^{n+q}-g^{n+q}\right\|_{X}^2\right)^{1/2},\\
\|R-r\|_{\infty}=\max\limits_{0\leq n\leq k}\{R^{n+1}-r^{n+1}\},\\
\end{array}\right.
\end{flalign*}
where $q=\frac{1}{2},~1$ and $X=m,~TM$.

In the following simulations, we choose $ \Omega=(0,1)\times(0,1)$  and $C_0=0$.

\subsubsection{Convergence rates of the SAV/CN-BCFD scheme for Allen-Cahn equation}

\noindent{\bf Example 1}. We take $T=0.5,~~\mathcal{G}=-1,~~\beta=0, ~~M=0.01,~\epsilon=0.08,\ \Delta t=5E-4$, and the initial solution $\phi_0=\cos(\pi x)\cos(\pi y)$.
To get around the fact that we do not have possession of exact solution, we measure
Cauchy error, which is similar to \cite{chen2016efficient,Wise2007Solving,Diegel2013Analysis}. Specifically, the error between two different grid spacings $h$ and $\frac{h}{2}$ is calculated by $\|e_{\zeta}\|=\|\zeta_h-\zeta_{h/2}\|$.

 The numerical results are listed in Table \ref{table1_example1}. we observe  the second-order convergence predicted by the error estimates in Theorem \ref{thm: L2_error_estimates}.

\begin{table}[htbp]
\renewcommand{\arraystretch}{1.1}
\small
\centering
\caption{Errors and convergence rates of Example 1.}\label{table1_example1}
\begin{tabular}{p{1cm}p{1.5cm}p{1cm}p{1.5cm}p{1cm}p{1.5cm}p{1cm}}\hline
$h$    &$\|e_Z\|_{\infty,2}$    &Rate &$\|e_{\textbf{d}Z}\|_{\infty,2}$   &Rate  
&$\|e_{W}\|_{\infty}$    &Rate   \\ \hline
$1/10$     &6.36E-3                & ---    &5.96E-2         &---  &5.93E-3         &---\\
$1/20$      &1.59E-3                & 2.00    &1.57E-2         &1.93 &1.47E-3         &2.01\\
$1/40$      &3.98E-4                &2.00     &3.98E-3         &1.98 &3.69E-4         &2.00\\
$1/80$      &9.96E-5                &2.00    &9.98E-4         &1.99   &9.23E-5         &2.00\\
\hline
\end{tabular}
\end{table}


\subsubsection{Convergence rates of SAV/CN-BCFD scheme for Cahn-Hilliard equation}

\noindent{\bf Example 2}. We take $T=0.5,~~\mathcal{G}=\Delta,~~\beta=0, ~~M=0.01,~\epsilon=0.2,\ \Delta t=5E-4$, with  the same initial solution as in Example 1.
 The numerical results are listed in Tables \ref{table1_example2} and \ref{table2_example2}. Again, we observe the  expected second-order convergence rate in various discrete norms.

 \begin{table}[htbp]
\renewcommand{\arraystretch}{1.1}
\small
\centering
\caption{Errors and convergence rates of example 2.}\label{table1_example2}
\begin{tabular}{p{1cm}p{1.5cm}p{0.7cm}p{1.8cm}p{0.7cm}p{1.8cm}p{0.7cm}}\hline
$h$    &$\|e_{Z}\|_{\infty,2}$    &Rate &$\|e_{\textbf{d}Z}\|_{\infty,2}$   &Rate  
&$\|e_{R}\|_{\infty}$    &Rate   \\ \hline
$1/10$     &5.49E-3                & ---    &2.78E-2         &---  &4.88E-3         &---\\
$1/20$      &1.36E-3                & 2.01    &6.91E-3         &2.01 &1.20E-3         &2.02\\
$1/40$      &3.41E-4                &2.00     &1.73E-3         &2.00 &3.00E-4         &2.00\\
$1/80$      &8.51E-5                &2.00    &4.31E-4         &2.00   &7.49E-5         &2.00\\
\hline
\end{tabular}
\end{table}

\begin{table}[htbp]
\renewcommand{\arraystretch}{1.1}
\small
\centering
\caption{{Errors and convergence rates of example 2.}}\label{table2_example2}
\begin{tabular}{p{1cm}p{2cm}p{1cm}p{2cm}p{1cm}}\hline
$h$    &$\|e_{W}\|_{2,2}$    &Rate &$\|e_{\textbf{d}W}\|_{2,2}$   &Rate  
 \\ \hline
$1/10$     &2.50E-2                & ---    &2.18E-1         &---  \\
$1/20$      &6.11E-3                & 2.03    &5.46E-2         &2.00 \\
$1/40$      &1.52E-3                &2.01     &1.37E-2         &2.00 \\
$1/80$      &3.79E-4                &2.00    &3.42E-3         &2.00   \\
\hline
\end{tabular}
\end{table}

\subsection{Coarsening dynamics and adaptive time stepping}
In this example, we simulate the coarsening dynamics of the Cahn-Hilliard equation. 

Since the scheme (\ref{e_H1_full-discret1})-(\ref{e_H1_full-discret3}) is unconditionally energy stable, we can choose time steps according to accuracy only with an adaptive time stepping.  Actually in many situations, the energy and solution of gradient flows can vary drastically in certain time intervals, but only slightly elsewhere. In order to maintain the desired accuracy, we adjust the time sizes based on an adaptive 
time-stepping strategy  below (Ref. \cite{Gomez2011Provably,shen2017new}).  
\begin{algorithm}
\caption{Adaptive time stepping procedure}
\ \textbf{Given:} $\textbf{Z}^{n}$ and $\Delta t^n$.
\begin{algorithmic}[1]
\STATE Computer $\textbf{Z}_{Ref}^{n+1}$ using a first order SAV-BCFD scheme and $\Delta t^n$.\\
\STATE Computer $\textbf{Z}^{n+1}$ using the SAV/CN-BCFD scheme (\ref{e_H1_full-discret1})-(\ref{e_H1_full-discret3}) and $\Delta t^n$.\\
\STATE Calculate $e^{n+1}=\|\textbf{Z}_{Ref}^{n+1}-\textbf{Z}^{n+1}\|/\|\textbf{Z}^{n+1}\|$.\\
\STATE \textbf{If} $e^{n+1}>tol$ \textbf{then}\\
\quad Recalculate time step $\Delta t^{n}\leftarrow \max\{\Delta t_{min}, \min\{A_{dp}(e^{n+1},\Delta t^n),\Delta t_{max}\}\}$.
\STATE \quad \textbf{goto} 1\\
\STATE \textbf{else}\\
\quad Update time step $\Delta t^{n+1}\leftarrow \max\{\Delta t_{min}, \min\{A_{dp}(e^{n+1},\Delta t^n),\Delta t_{max}\}\}$.
\STATE \textbf{endif}\\
\end{algorithmic}
\end{algorithm}
We update the time step size by using the formula
\begin{equation}\label{update time step formula}
\aligned
A_{dp}(e,\Delta t)=\rho\left( \frac{tol}{e}\right)^{1/2}\Delta t,
\endaligned
\end{equation}
where $\rho$ is a default safety coefficient, $tol$ is a reference tolerance, and $e$ is the relative error at each time level. In this simulation, we take 
\begin{flalign*}
\renewcommand{\arraystretch}{1.5}
  \left\{
   \begin{array}{l}
  \mathcal{G}=\Delta,~\Delta t_{max}=10^{-2},~\Delta t_{min}=10^{-5},\ tol=10^{-3},\\
  M=0.002,~\epsilon=0.01,~~\beta=6,\;
  \rho=0.9,
\end{array}\right.
\end{flalign*}
with a  random initial condition with values in  $[-0.05,0.05]$, and the initial time step is taken as $\Delta t_{min}$.



To demonstrate the effectivity of the SAC/CN-BCFD scheme with adaptive time stepping, we 
compute the reference solutions with a small uniform time step $\Delta t=10^{-5}$ and a large uniform time step $\Delta t=10^{-3}$ respectively. Characteristic evolutions of the phase functions are presented in Fig. \ref{fig2_example5}. We also present in Fig. \ref{fig3_example5} the energy evolutions and the roughness of interface, where the roughness measure function $R(t)$ is defined as follows:
\begin{equation}\label{e_example 5_Roughness}
\aligned
R(t)=\sqrt{\frac{1}{|\Omega|}\int_{\Omega}(\phi-\bar{\phi})^2d\Omega},
\endaligned
\end{equation}
with $\bar{\phi}=\frac{1}{|\Omega|}\int_{\Omega}\phi d\Omega$. One observes that the solution obtained with adaptive time steps  is  consistent with the reference solution obtained with a small time step, while the solution with large time step deviates from the reference solution.  This is also verified by both the energy evolutions and roughness measure function $R(t)$. We present in Fig. \ref{fig4_revised_different_epsilon} the adaptive time steps for different $\epsilon=0.02, \ 0.01,\ 0.005$. We  observe that there are about two-orders of magnitude variation in the time steps with the adaptive time stepping, which indicates that the adaptive time stepping for the SAV/CN-BCFD scheme is very efficient. 

\begin{figure}
\centering
\begin{minipage}[c]{0.14\linewidth}
$\Delta t=10^{-5}$
\end{minipage}
\subfigure[$T=0.02$]{
    \begin{minipage}[c]{0.26\textwidth}
    \includegraphics[width=1\textwidth]{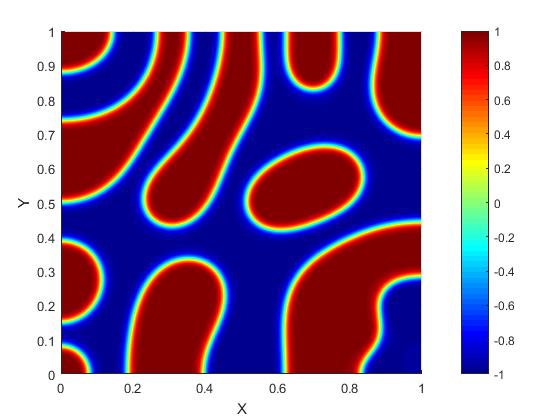}
    \end{minipage}
}
\subfigure[T=0.10]{
    \begin{minipage}[c]{0.26\textwidth}
    \includegraphics[width=1\textwidth]{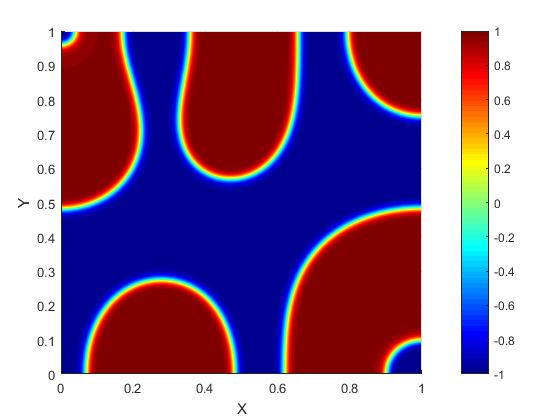}
    \end{minipage}
}
\subfigure[T=1.0]{
    \begin{minipage}[c]{0.26\textwidth}
    \includegraphics[width=1\textwidth]{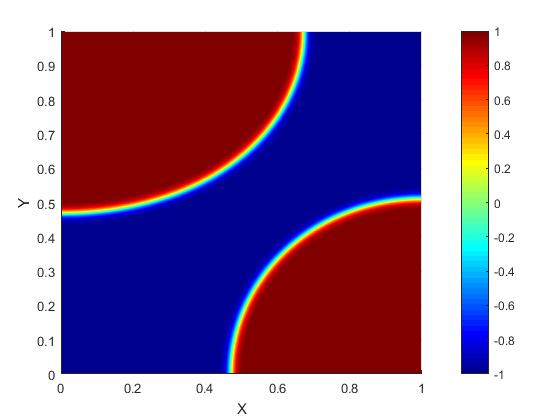}
    \end{minipage}
}
\rule{13cm}{0.05em}
\begin{minipage}[c]{0.14\linewidth}
Adaptive
\end{minipage}
\subfigure[T=0.02000]{
    \begin{minipage}[c]{0.26\textwidth}
    \includegraphics[width=1\textwidth]{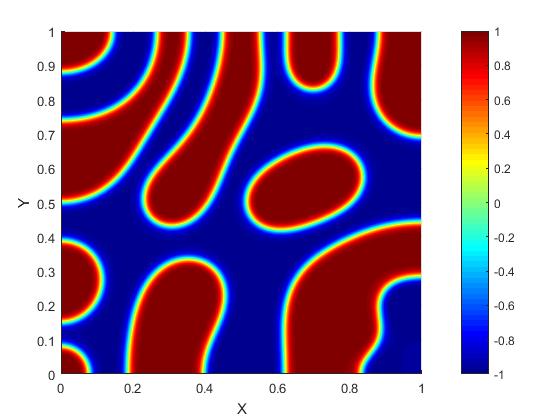}
    \end{minipage}
}
\subfigure[T=0.10000]{
    \begin{minipage}[c]{0.26\textwidth}
    \includegraphics[width=1\textwidth]{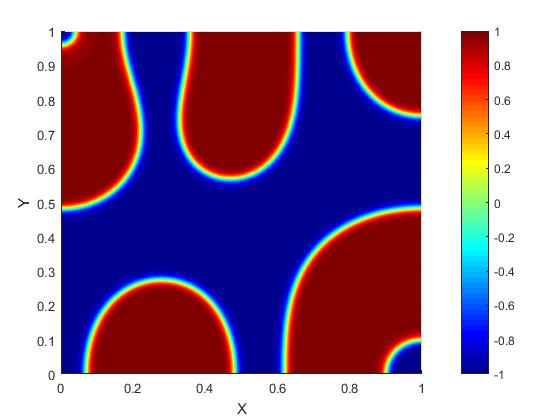}
    \end{minipage}
}
\subfigure[T=0.99990]{
    \begin{minipage}[c]{0.26\textwidth}
    \includegraphics[width=1\textwidth]{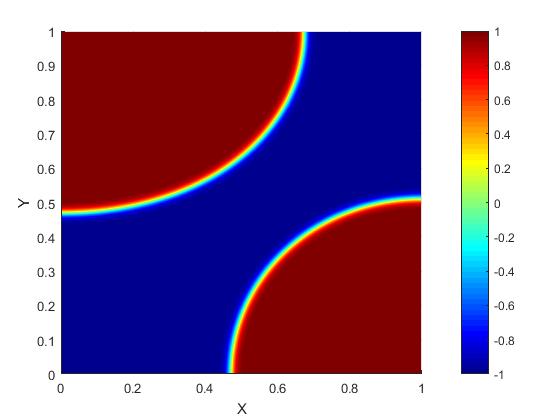}
    \end{minipage}
}
\rule{13cm}{0.05em}
\begin{minipage}[c]{0.14\linewidth}
$\Delta t=10^{-3}$
\end{minipage}
\subfigure[T=0.02]{
    \begin{minipage}[c]{0.26\textwidth}
    \includegraphics[width=1\textwidth]{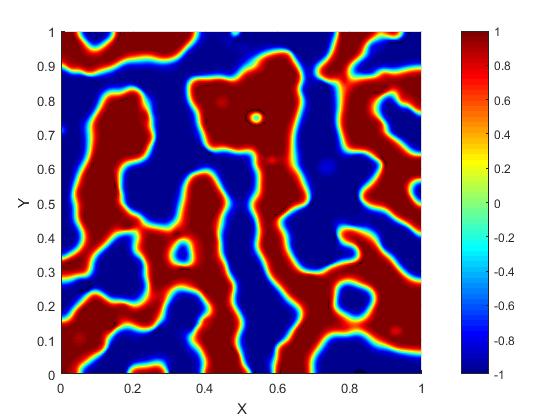}
    \end{minipage}
}
\subfigure[T=0.1]{
    \begin{minipage}[c]{0.26\textwidth}
    \includegraphics[width=1\textwidth]{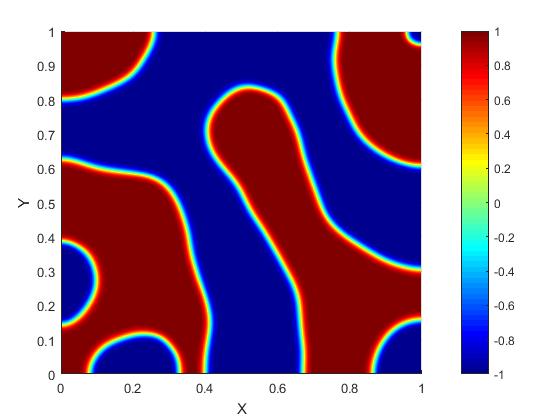}
    \end{minipage}
}
\subfigure[T=1.0]{
    \begin{minipage}[c]{0.26\textwidth}
    \includegraphics[width=1\textwidth]{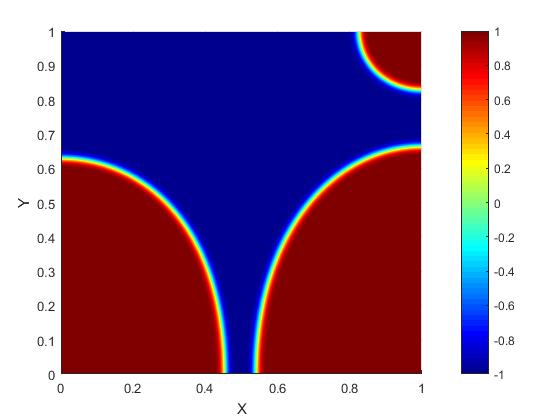}
    \end{minipage}
}
\caption{  {Snapshots of the phase function among small time steps, adaptive time steps and large time steps in example 3.} }\label{fig2_example5}
\end{figure}

\begin{figure}[!htp]
\centering
\includegraphics[scale=0.2]{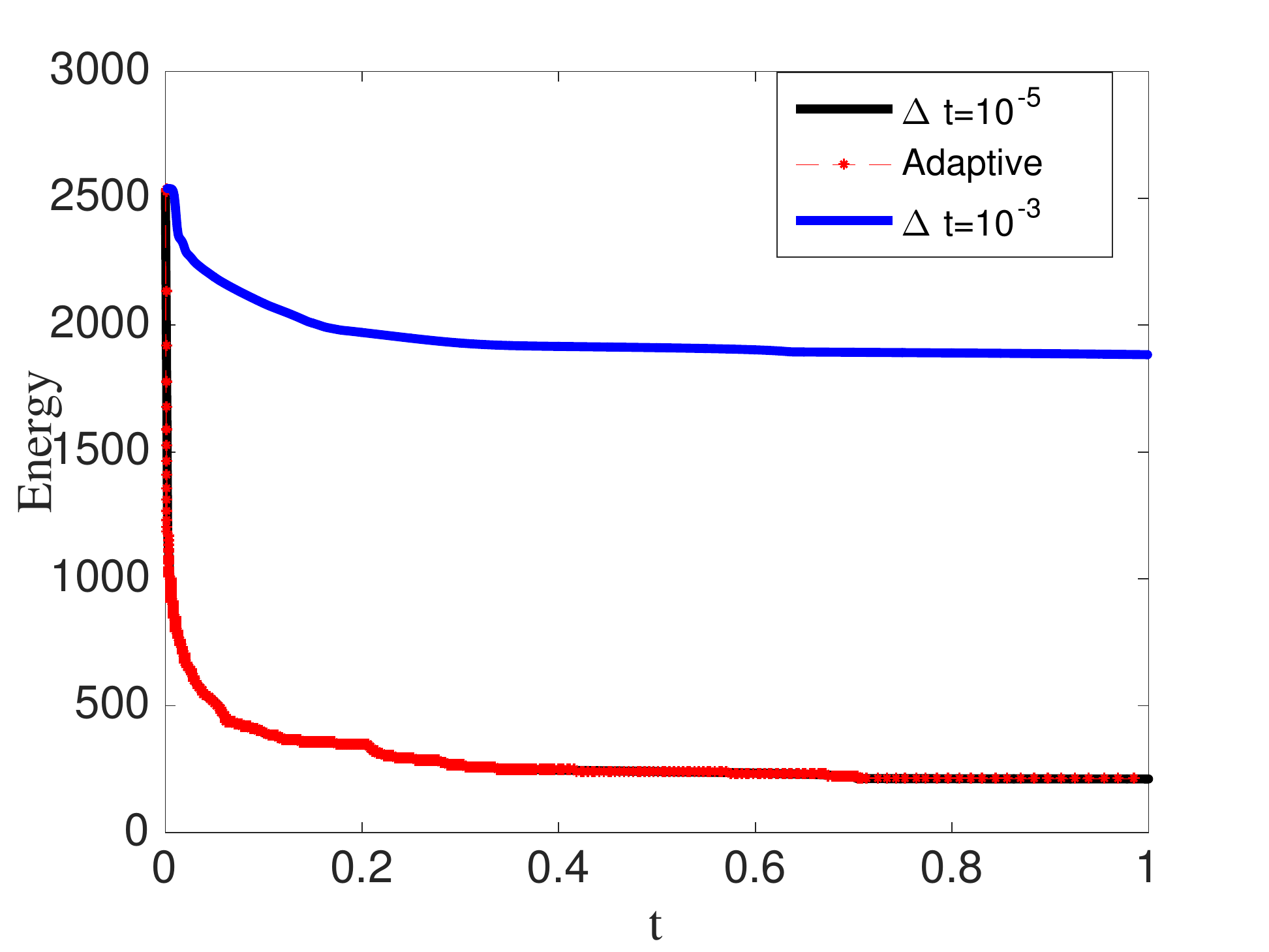}
\includegraphics[scale=0.2]{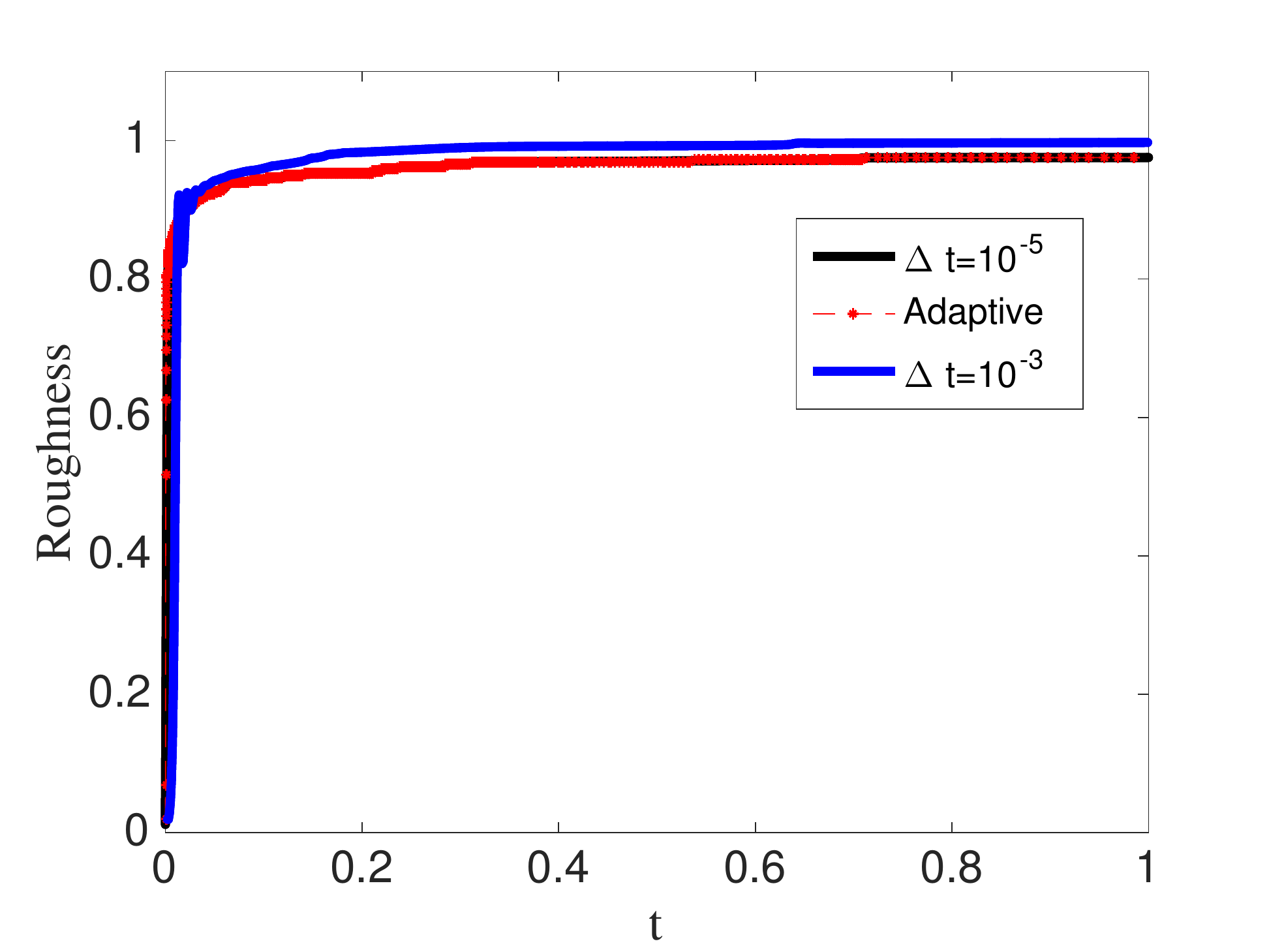}
\caption{Numerical comparisons of discrete scaled surface energy and roughness  for the simulation of spinodal decomposition in example 3.}  \label{fig3_example5}
\end{figure}

\begin{figure}[!htp]
\centering
\includegraphics[scale=0.2]{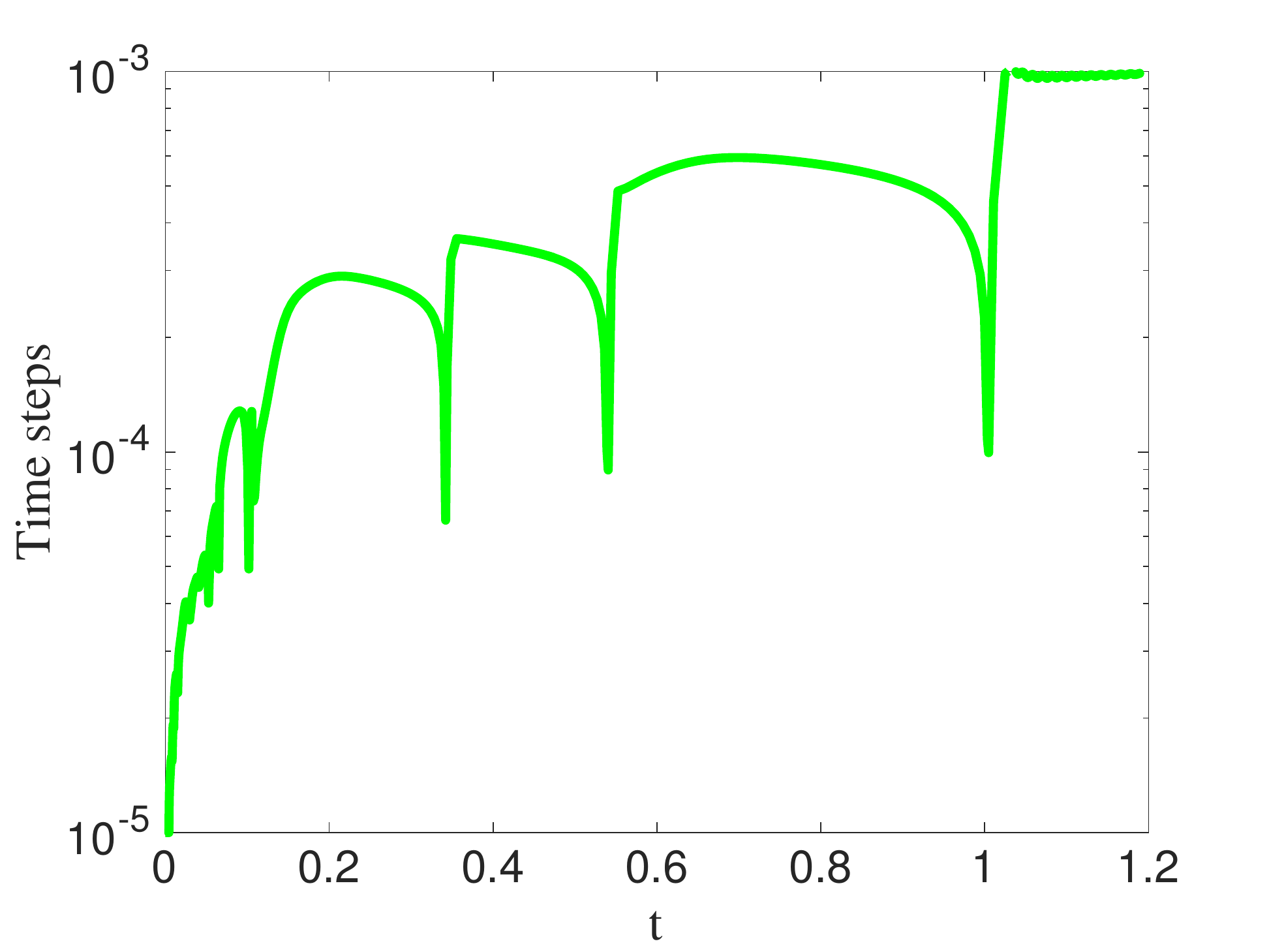}
\includegraphics[scale=0.2]{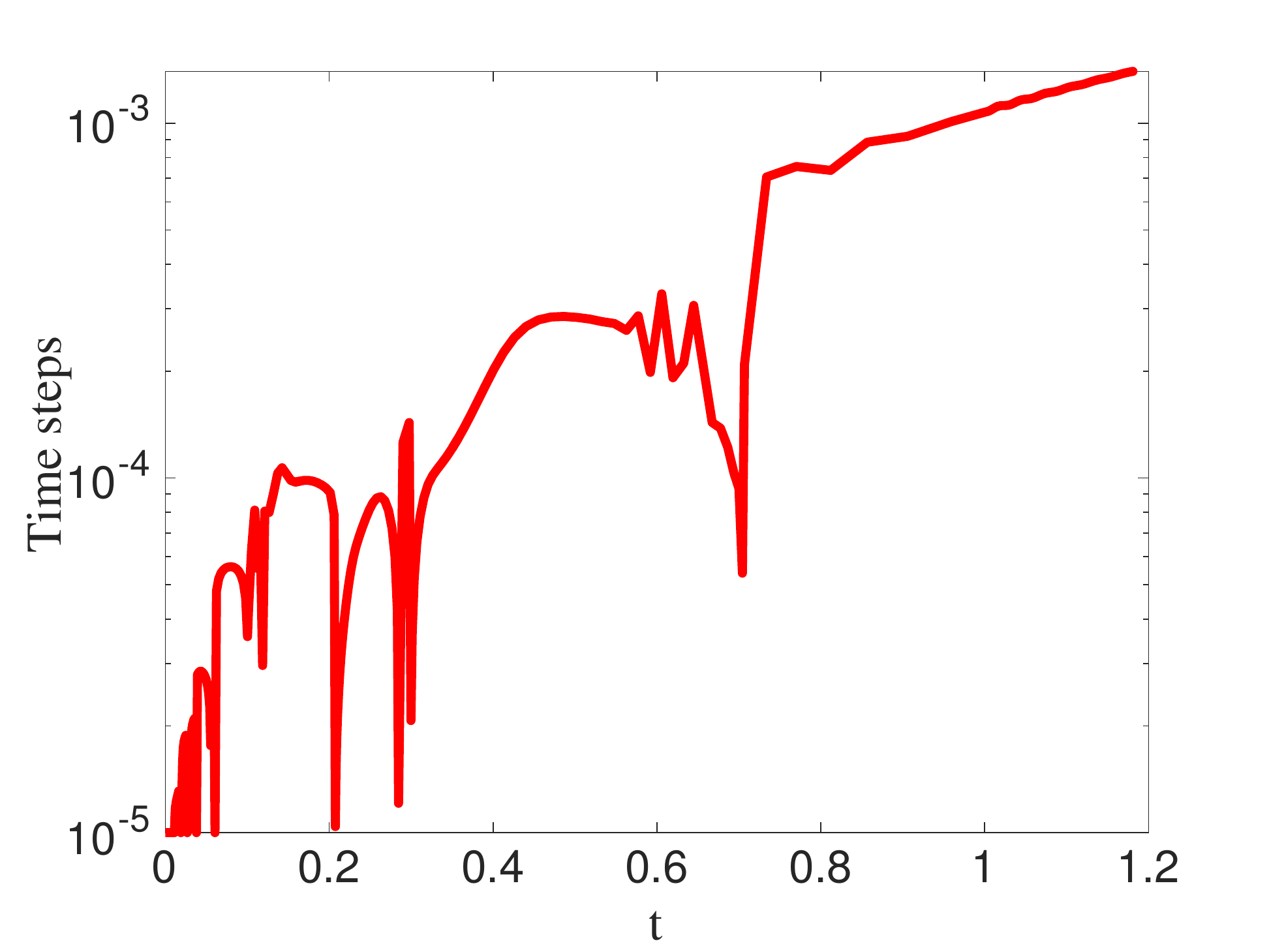}
\includegraphics[scale=0.2]{epsilon_02-eps-converted-to.pdf}
\caption{Adaptive time steps for different $\epsilon$: (a) $\epsilon=0.02$, (b) $\epsilon=0.01$, (c) $\epsilon=0.005$} \label{fig4_revised_different_epsilon}
\end{figure}


\section*{Acknowledgments}
 X. Li thanks for the financial support from China Scholarship Council. The work of J. Shen is supported in part by NSF grants  DMS-1620262, 
DMS-1720442 and AFOSR  grant FA9550-16-1-0102. The work of H. Rui is supported by the National Natural Science Foundation of China grant 11671233. 

\bibliographystyle{siamplain}
\bibliography{ex_article}
\end{document}